\newcommand{\refc}{\mathbf}
\newtheorem{theorem}{Theorem}[section]
\newtheorem{lemma}[theorem]{Lemma}
\newtheorem{proposition}[theorem]{Proposition}
\newtheorem{corollary}[theorem]{Corollary}
\theoremstyle{remark}
\newtheorem{remark}[theorem]{Remark}
\theoremstyle{definition}
\newtheorem{example}[theorem]{Example}
\newtheorem{definition}[theorem]{Definition}
\DeclareMathOperator{\Ob}{Ob}
\DeclareMathOperator{\Mor}{Mor}
\DeclareMathOperator{\supp}{supp}
\DeclareMathOperator{\im}{im}
\newcommand{\SSS}{S^1} % cerchio
\newcommand{\Z}{\mathbb{Z}} % integer numbers
\newcommand{\EC}{\mathbb E} % curva ellittica
\newcommand{\AAA}{\mathscr A} % Arrangiamento
\newcommand{\WW}{\mathcal{W}} % set of walls
\newcommand{\PP}{\mathcal{P}} % set of codim 2 faces
\newcommand{\CCC}{\mathscr C} % Categoria
\newcommand{\KK}{\mathcal{K}} % categoria con stessa omotopia
\newcommand{\FF}{\mathcal F} % categoria delle facce
\newcommand{\AAE}{\AAA_{\mathbb{E}}}
\DeclareMathOperator{\codim}{codim}
\DeclareMathOperator*{\colim}{colim}
\newcommand{\interior}[1]{\operatorname{int}(#1)}
\newcommand{\Id}{\operatorname{Id}}
\newcommand{\id}{\operatorname{id}}
\newcommand{\AC}{\texttt{AC}}
\newcommand{\gr}[1]{\lVert {#1} \rVert}
\newcommand{\ie}{i.e.\ }
\tikzset{->-/.style={decoration={
  markings,
  mark=at position 0.7 with {\arrow{>}}},postaction={decorate}}}
\title{The homotopy type of elliptic arrangements}
\author[E. Delucchi]{Emanuele Delucchi}
\address{Emanuele Delucchi \newline {Universit\'e de Fribourg\\D\'epartement de math\'ematiques} \newline Chemin du Mus\'ee 23, CH-1700 Fribourg, Switzerland}
\email{emanuele.delucchi@unifr.ch}
\urladdr{http://www.maestran.ch/math/}
\author[R. Pagaria]{Roberto Pagaria}
\address{Roberto Pagaria \newline {Scuola Normale Superiore} \newline Piazza dei Cavalieri 7, 56126 Pisa, Italy
\newline {Università di Bologna} \newline Piazza di Porta San Donato 5, 40126 Bologna, Italy}
\email{roberto.pagaria@sns.it}
\email{roberto.pagaria@unibo.it}
\urladdr{https://www.dm.unibo.it/~roberto.pagaria/}
\begin{document}

\begin{abstract}
We give combinatorial models for the homotopy type of complements of elliptic arrangements (i.e., certain sets of abelian subvarieties in a product of elliptic curves).  We give a presentation of the fundamental group of such spaces and, as an application, we treat the case of ordered configuration spaces of elliptic curves.

Our models are finite polyhedral CW complexes, and our combinatorial tools of choice are acyclic categories (small categories without loops). 
As a stepping stone, we give a characterization of which acyclic categories arise as face categories of polyhedral CW complexes.
\end{abstract}

\maketitle

\section{Introduction}
\subsection{Background}
The study of arrangements of hypersurfaces in a given ambient space is a classical research topic.
From enumerative problems related to topological dissections to the computation of algebraic-topological invariants of the arrangements' complements, the interaction of combinatorics, topology and geometry is a hallmark of this field. 
 
  This interplay has brought forth interesting results already in the case of arrangements of hyperplanes in complex vector spaces. For instance, the fundamental groups of the complements of such arrangements exhibit interesting structural features that interact subtly with the combinatorial data \cite{infinito}. A prominent role in the advancement of the field has been played by combinatorial models for the complements. In particular, Salvetti used the polyhedral data of an arrangement of hyperplanes in Euclidean space in order to construct a simplicial complex that carries the homotopy type of the complexified arrangements'complement \cite{salvetti}. Bj\"orner and Ziegler have studied a class of simplicial models for complements of general complex arrangements \cite{BiZi}.

 Other classes of arrangements, beyond the case of hyperplanes, came into the focus of a growing amount of research over the last decade. Arrangements in tori, or ``toric arrangements'', gave rise to new combinatorial structures \cite{MB,MdA,FiMo,GAOS} as well as to an intensive study of the topology and geometry of the complement of arrangements in the complex torus \cite{a5,DD2,DCG1,DCG2,DCG3,DCP05,MociPagaria}.
Also in this ``toric'' case, a recurring and interesting theme has been the study of the interplay between combinatorics and topology \cite{Pagaria1,Pagaria2}.

 Several aspects of the theory of arrangements of hyperplanes have been generalized to toric arrangements and beyond, pointing to the potential for a general  treatment of the subject.

\subsection{Subject and results of this paper}
In this paper we focus on arrangements of special subvarieties in products of elliptic curves called {\em elliptic arrangements} (see \Cref{arrangements} for the precise definition and \Cref{generality} for a discussion of our level of generality compared to existing literature).  These arrangements were first considered by Varchenko and Levin \cite{VL12} and, together with arrangements in complex vector spaces and in the complex torus, they have been in the focus of intense study in the last decade. 
However,  the topology of elliptic arrangements is by far the less understood in this triad.
For instance, despite the availability of algebraic models for the cohomology \cite{Bibby16,Dupont} and some results on their dependency on the combinatorial data \cite{PagariaBrutto},  basic topological invariants -- e.g., Betti numbers  -- remain out of reach \cite{PagariaContrEll} even for very special cases such as the configuration space of $n$ points in an elliptic curve, which can be realized as the complement space of an elliptic arrangement. 

\begin{itemize}
\item We construct a combinatorial finite cell complex that carries the homotopy type of the arrangements' complement. The combinatorial tool of choice are finite acyclic categories. These discrete structures have emerged as a flexible and powerful tool, e.g., in computational topology \cite{Kozlov}, metric geometry \cite{Bridson-Haefliger} and beyond. 
Thus our model  puts the elusive topology of elliptic arrangements in the range of the substantial computational and structural toolbox of combinatorial algebraic topology.
\item We use our model to give a presentation of the complements' fundamental group by generators and relations. As an application, we give a new combinatorial presentation of the fundamental group of the configuration space of $n$ points in an elliptic curve.
\end{itemize}
 The combinatorial data needed for our construction is the {\em face category} of an associated toric arrangement. This is a finite category without loops that encodes the induced cellularization of the compact torus. Such categories have demonstrated their usefulness in a variety of contexts, not least in proving results on the topology of toric arrangements \cite{DD2}. Accordingly, in the extant literature they are treated from different points of view, see e.g.\ \cite{Bridson-Haefliger,Kozlov}. 
 \begin{itemize}
 \item We review and partly develop the combinatorial theory of nerves of face categories of polyhedral complexes with group actions, as a generalization of Bj\"orner's poset-theoretic approach to the study of regular CW-complexes \cite{BjornerCW}. 
 \end{itemize}

A generalization of our results to wider classes of  arrangements is possible using the toolkit  we develop in \Cref{appendix}. We leave this line of research open, but suggest  a concrete approach to models for a class of arrangements in abelian Lie groups studied by Liu, Tran and Yoshinaga (see \Cref{lty}) as well as for the fullest generality of elliptic arrangements (see \Cref{approach}).

\subsection{Structure of the paper}  In \Cref{arrangements} we define and contextualize elliptic arrangements. \Cref{models} contains the construction of the combinatorial model and the proof that it carries the correct homotopy type. A finite presentation for the complement's fundamental group is given in \Cref{sect:fundamental_group}. As an application, in \Cref{An} we discuss elliptic arrangements of Coxeter type $A$, whose complements are the configuration spaces of of points in elliptic curves, and we give the explicit form of our presentation for the fundamental group in this case. The necessary notions about topology and combinatorics of polyhedral complexes and their face categories are gathered, resp.\ proved in the \Cref{appendix}.

\subsection{Acknowledgements} 
The first author was supported by the Swiss National Science Foundation professorship grant PP00P2\_150552/1. This research was initiated during a visit of the second author at the university of Fribourg and was completed during first author's stay as an ISA fellow at the University of Bologna. Both authors thank these institutions for for the friendly and supportive hospitality.

\section{Arrangements}\label{arrangements}
Our basic data is an elliptic curve $\mathbb E=\mathbb C/\langle 1, \omega \rangle$ (for a fixed $\omega\in \mathbb C\setminus \mathbb R$), a full-row-rank integer matrix $A\in \mathbb Z^{d\times n}$ whose columns we label $a_1,\ldots,a_n$, and some numbers $b_1,\ldots,b_n\in \mathbb R/\mathbb Z$. For each $j=1,\ldots,n$ we have hypersurfaces
\[
\mathbf{H}_j:=\left\{ z\in \mathbb E^d \,\left\vert\, \sum_k z_k a_{j,k} = b_j+ \omega b_j \right.\right\}\subseteq \mathbb E^d,
\]
\[
{H}_j:=\left\{ z\in \mathbb (S^1)^d \,\left\vert\, \prod_k z_k^{a_{j,k}} = b_j \right.\right\}\subseteq (S^1)^d.
\]
The {\em toric arrangement} defined by $A$ and $b$ is
\[
\AAA:=\set{H_1,\ldots,H_n}.
\]
The {\em elliptic arrangement} associated to $A$ and $b$ is 
\[
\AAE=\set{\mathbf{H}_1,\ldots,\mathbf{H}_n}.
\]
We will be interested in the topology of the complement of an elliptic arrangement, \ie the space
\[
M(\AAE):=\EC^d\setminus \cup \AAE.
\]

\begin{remark}
Our requirement that the matrix $A$ is full-row-rank is sometimes referred to as the arrangement being {\em essential}, i.e., the minimal intersections have dimension $0$. If $\AAE$ is a non-essential arrangement, then $\mathbb{E}^d$ splits as a product $\mathbb{E}^{k}\times \mathbb{E}^{d-k}$, where $k$ is the dimension of any minimal intersection of $\AAE$ and $A$ defines an essential arrangement $\overline{\AAE}$ in $\mathbb{E}^{d-k}$. Moreover, $M(\AAE)=\mathbb{E}^{k}\times M(\overline{\AAE})$ and our methods can be applied in order to model the homotopy type of $M(\overline{\AAE})$.
\end{remark}

\begin{example}[The arrangements of Coxeter type $A_d$]\label{es:1}
The arrangement of reflecting hyperplanes for the standard representation of the Coxeter group of type $A_d$ is given by the hyperplanes of equation 
 \begin{equation}\label{ciliegina}
 x_i=x_j \quad\textrm{ for }\quad 0\leq i < j \leq d
 \end{equation} and is customarily referred to as the ``braid arrangement''.
It is not essential since all the hyperplanes contain the line $\ell:\,x_0=x_1= \dots = x_d$. 
One can obtain an essential arrangement with the same combinatorics and the same homotopy type by intersecting the braid arrangement with any complement of the line $\ell$. A frequent choice for such complement is the subspace $\{x_0+x_1+\dots +x_d=0\}$.
In the toric and elliptic case, the arrangement defined by the equations \eqref{ciliegina} is again not essential.
However, now we cannot perform the same choice of complement to $\ell$ because $\{x_0+x_1+\dots +x_d=0\} \subset \Z^{d+1}$ and $\ell\cap \Z^{d+1}=\Z(1,1,\dots,1)$ together do not span $\Z^{d+1}$. 
Our choice is to take $\{x_0=0\} \subset \Z^{d+1}$ as complement of the line $\Z(1,1,\dots,1)$. This  is motivated by the application to configuration spaces, see \Cref{rem:found_group}.
We call this essential arrangement the {\em toric (resp.\ elliptic) arrangement of Coxeter type $A_d$}.
Explicitly, such arrangements contain one hypersurface $H_{0,i}$ with equation $x_i=0$ for all $i=1,\ldots,d$ and one hypersurface $H_{i,j}$ with equation $x_i=x_j$ for all $1\leq i < j \leq d$.  
A picture of the case $d=2$ is displayed in Figure \ref{fig:A2_in_S1}.
Our definition follows \cite{BibbyRepr,DPG,MociRoot}, while we remark that the toric arrangements of Coxeter type discussed by Aguilar and Petersen \cite{AgPe} are "finite covers" of those we define. More precisely: if we let $\mathscr A^\upharpoonright$ denote the infinite arrangement in $\mathbb R^d$ given by the reflection hyperplanes of the affine group $\widetilde{A_d}$, Aguiar and Petersen consider the induced arrangement in the torus obtained as the quotient of $\mathbb R^d$ by the coroot lattice of $A_d$. Under this point of view, our definition corresponds to quotienting by the full weight lattice. It is in this sense that the two definitions differ by a covering map whose degree is the index of the coroot lattice in the weight lattice, i.e., $d+1$.
\end{example}

\begin{remark}\label{generality}
Elliptic arrangements were defined for the first time in \cite{VL12} by Levin and Varchenko. Their definition 
coincides with ours, although they only consider \emph{central} elliptic arrangements, \ie $c_j =0$ for all $j$.
Suciu in \cite{Suciu16} has studied a larger class of elliptic arrangements,  allowing translations by any element $c_j \in \EC$ (we consider only translations with $c_j= b_j + \omega b_j$ for some $b_j \in S^1$).\\
It is natural to extend the definition of an elliptic arrangement by allowing for divisors of the form $H=\varphi^{-1}(c)$ where $\varphi$ is any group homomorphism  $\mathbb E^n \to \mathbb E$ and $c$ is any element of $\mathbb E$. When $\mathbb E$ is not complex multiplication, then any such $\varphi$ is defined by an element of $\mathbb Z^n$ as in our setup. Otherwise, $\operatorname{End}\mathbb E \supsetneq \mathbb Z$ and thus there are  homomorphisms $\mathbb E^n \to \mathbb E$ that cannot be obtained from integer vectors. 
 The only results available  in this case were obtained for general {\em Abelian arrangements} (arrangements of abelian subvarieties in an abelian variety) by Bibby \cite{Bibby16} and, in the even greater generality arrangements of hypersurfaces, by Dupont \cite{Dupont}.\\
 The above-mentioned  ``non-integral'' elliptic arrangements  are not in the scope of this paper.
In particular, we leave the construction of cellular models for general elliptic arrangements as an open problem (see also \Cref{approach}). 
\end{remark}

\section{A combinatorial model for elliptic arrangements}\label{models}
The lift of a toric arrangement $\AAA$ through the universal cover of the torus $(S^1)^d$ is a periodic arrangement of hyperplanes $\AAA^\upharpoonright$ in $\mathbb R^d$. This periodic arrangement defines a structure of polyhedral CW complex $K_{\AAA^\upharpoonright}$ on $\mathbb R^d$.
The deck transformation group is $\mathbb Z^d$ acting freely on $\mathbb R^d$ and cellularly on $K_{\AAA^\upharpoonright}$. We call $K_\AAA$ the quotient complex $K_{\AAA^\upharpoonright}/\mathbb Z^d$. This is a polyhedral CW complex structure on the space $(S^{1})^d$. See \Cref{appendix} for terminology and definitions on polyhedral complexes and face categories.

\begin{definition}\label{df:FA}
Given a toric arrangement $\AAA$ let $\mathscr F(\AAA):=\FF(K_{\AAA})$ be the face category of the complex $K_\AAA$.
\end{definition}

\begin{remark}\label{rmk:quot_face_cat}
In particular, notice that  $\FF(\AAA)\simeq \FF(K_{\AAA^{\upharpoonright}})/\Z^d$.
\end{remark}

In this section we keep the notations established in \Cref{arrangements}. Moreover, when the context allows it, we will write $\mathscr F$ as a shorthand for $\mathscr F(\AAA)$.

For every $F\in \Ob(\FF)$ let $$\supp(F):=\set{H\in \AAA \mid F\subseteq H}$$ denote the set of all hypersurfaces in $\AAA$ that contain $F$.

The product $K_\AAA\times K_\AAA$ is the space $(S^1)^{d+d}$, which we consider with the natural product cellularization. This polyhedral CW complex has face category equal to $\FF\times \FF$. (\cite[Theorem A.6]{HatcherBook}).

\begin{definition}
\label{def:KK}
Define a category $\mathcal K$ as follows.
\[\Ob(\mathcal K):=\set{(F,G)\in \FF \times \FF \mid 
\supp(G)\cap \supp(F)=\emptyset 
}\]
\[\Mor_{\KK}((F_1,G_1),(F_2,G_2)):=
\set{
(\alpha,\beta) \mid \alpha \in \Mor_{\FF^{op}}(F_1,F_2), \beta \in \Mor_{\FF^{op}}(G_1,G_2)
}\]
\end{definition}

\begin{remark}
The category $\mathcal{K}^{op}$ is a full subcategory of the product $\FF \times \FF$ (see \cite[§II.3]{maclane}). 
\end{remark}

\begin{lemma} \label{lemma:phi}
For any elliptic arrangement $\AAE$, the total space $\cup\AAE$ is mapped  homeomorphically to the subcomplex of $K_\AAA\times K_\AAA$ associated to the full subcategory $\mathscr D$ of $\FF\times\FF$ defined  on the object set 
\[\Ob\mathscr D=\set{(F,G)\in \Ob(\FF\times\FF) \mid \supp(F)\cap \supp(G)\neq \emptyset },\]
giving
\[
\cup\AAE \simeq \gr{\mathscr D} \subseteq \gr{\FF\times \FF}.
\]
\end{lemma}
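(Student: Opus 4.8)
The plan is to reduce everything to the product structure of the elliptic curve, after which the statement becomes a translation between polyhedral complexes and their face categories. \emph{Step 1.} I would first record that each elliptic hypersurface is the product of two copies of the corresponding toric one. Writing $\mathbb E=\mathbb C/\langle 1,\omega\rangle=(\mathbb R/\Z)\oplus\omega(\mathbb R/\Z)$, every $z\in\mathbb E^d$ decomposes uniquely as $z=x+\omega y$ with $x,y\in(S^1)^d$, which identifies $\mathbb E^d$ with $(S^1)^d\times(S^1)^d$, the underlying space of $K_\AAA\times K_\AAA$. Because $A$ has integer entries, the equation $\sum_k z_k a_{j,k}=b_j+\omega b_j$ splits into its real and its $\omega$-components, each of which is exactly (an additive rewriting of) the equation $\prod_k z_k^{a_{j,k}}=b_j$ defining $H_j$. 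Hence $\mathbf{H}_j=H_j\times H_j$ and $\cup\AAE=\bigcup_{j=1}^n(H_j\times H_j)$. This is the one genuinely geometric step -- it is exactly here that the specific shape of our arrangements (integer coefficients and translation by $b_j+\omega b_j$) is used -- and the place I would expect to need the most care, although the verification itself is routine.

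\emph{Step 2.} Next I would pass to the cellular picture. By construction $K_\AAA$ is the polyhedral CW structure induced by $\AAA$ on $(S^1)^d$, namely the $\Z^d$-quotient of the complex $K_{\AAA^\upharpoonright}$ cut out on $\mathbb R^d$ by the periodic hyperplane arrangement $\AAA^\upharpoonright$; in this structure every $H_j$ is a subcomplex. Therefore each $\mathbf{H}_j=H_j\times H_j$ is a subcomplex of $K_\AAA\times K_\AAA$, and so is the finite union $\cup\AAE$; the identification of Step 1 then restricts to a homeomorphism of $\cup\AAE$ onto this subcomplex. \emph{Step 3.} It remains to determine which cells of $K_\AAA\times K_\AAA$ lie in $\cup\AAE$. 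A cell is a product $F\times G$ with $F,G\in\Ob\FF$ and open part $F^\circ\times G^\circ$; since an open cell that meets a subcomplex is contained in it, $F\times G\subseteq\cup\AAE=\bigcup_j\mathbf{H}_j$ holds if and only if $F\times G\subseteq\mathbf{H}_j=H_j\times H_j$ for some $j$, i.e., projecting onto the two factors, if and only if there is a $j$ with $F\subseteq H_j$ and $G\subseteq H_j$, that is, if and only if $\supp(F)\cap\supp(G)\neq\emptyset$. Thus the cells of the subcomplex $\cup\AAE$ are precisely the objects of $\mathscr D$, and -- the face category of a subcomplex being the full subcategory spanned by the cells it contains (see \Cref{appendix}) -- the face category of $\cup\AAE$ equals $\mathscr D$.

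\emph{Step 4.} Finally, since the geometric realization of the face category of a polyhedral complex is homeomorphic to the complex itself (\Cref{appendix}) and $\mathscr D$ is a full subcategory of $\FF\times\FF$, so that $\gr{\mathscr D}\subseteq\gr{\FF\times\FF}$, the three previous steps combine to give $\cup\AAE\cong\gr{\mathscr D}$, and in particular the asserted homotopy equivalence.
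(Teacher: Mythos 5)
Your proof is correct and follows essentially the same route as the paper's. The paper's proof consists of what you call Step~1 — introducing the homeomorphism $\varphi\colon \mathbb E^d\to (S^1)^{d+d}$, $z\mapsto(\Re_\omega(z),\Im_\omega(z))$ and verifying, using integrality of $a_j$, that $z\in\mathbf H_j$ iff $a_j^T\Re(z)=a_j^T\Im(z)=b_j$, hence $\mathbf H_j\in\supp_{\AAE}(z)\Leftrightarrow H_j\in\supp_{\AAA}\Re(z)\cap\supp_{\AAA}\Im(z)$ — and leaves the purely formal Steps~2--4 (passing from the pointwise support description to the cell-level statement and to the face-category statement) implicit; you have usefully spelled those out.
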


\begin{remark}\label{remop}
Notice that, writing $X:=\Ob\mathscr D$, in the language of \Cref{lem:retract} we can write $\mathscr D =(\FF\times\FF)[X]$, $\KK^{op}=(\FF\times\FF)[X^c]$.
\end{remark}

\begin{proof}[Proof of \Cref{lemma:phi}]
Let as above $\EC=\mathbb C/\langle 1, \omega \rangle$ and consider the map
\[
\varphi \colon \EC^d \to (S^1)^{d+d},\quad\quad z\mapsto (\Re_{\omega}(z),\Im_{\omega}(z)),
\]
where for every $z\in \mathbb E$ we consider the unique expression $z=x+\omega y$ with $x,y\in \mathbb R/\mathbb Z$, and let $\Re_{\omega}(z):=x$, $\Im(z)_{\omega}:=y$.
This is a homeomorphism that maps $\cup\AAE$ to a subcomplex of $K_\AAA\times K_\AAA$. In fact, for every $\mathbf{H}_j\in \AAE$ defined by an integer vector $a_j\in \mathbb Z^d$ and a value $b_j\in S^1$, a $z\in \mathbb E^d$ is in $\mathbf{H}_j$ if and only if $a_j^Tz=a_j^T\Re(z)+\omega a_j^T\Im(z) = b_j+\omega b_j$, \ie $a_j^T\Re(z)=a_j^T\Im(z)=b_j$.
Thus, for any $z\in \EC^d$, 
\[
\mathbf{H}_j\in\supp_{\AAE}(z)
\quad\Leftrightarrow\quad
H_j\in\supp_{\AAA}\Re(z) \cap \supp_{\AAA}\Im(z). \qedhere
\]
\end{proof}

\begin{example}
Consider the elliptic arrangement $A_2$ described by the equations
\begin{align*}
& z_1 = 0, \\
& z_2 = 0, \\
& z_1+z_2 = 0.
\end{align*}
\begin{figure}
\begin{subfigure}{0.49\textwidth}
\centering
\begin{tikzpicture}
		\draw[gray] (0, 0) rectangle (2,2);
		\draw [color=blue,line width=1.5pt] (0,0) -- node[label={[shift={(0,-0.7)}]$b$}]{} (2,0);
		\draw [color=green,line width=1.5pt] (0,0) -- node[label={$c$}]{} (2,2);
		\draw [color=red,line width=1.5pt] (0,0) -- node[label={[shift={(-0.2,0)}]$a$}]{} (0,2);
		\draw (0,0) node [circle, draw, fill=black, inner sep=0pt, minimum width=4pt,label={[shift={(-0.2,0)}]$p$}] {};
		\node (c1) at (0.5,1.3) {$C_1$};
		\node (c2) at (1.3,0.5) {$C_2$};
\end{tikzpicture}
\caption{The $A_2$ arrangement in the unit square.}\label{fig:A2_in_S1}
\end{subfigure}
\begin{subfigure}{0.49\textwidth}
\centering
\begin{tikzpicture}
    \node (p) at (0,2) {$p$};
    \node (e1) at (-1,1)  {$a$};
    \node (e2) at (0,1) {$b$};
    \node (e3) at (1,1) {$c$};
    \node (f1) at (-0.5,0) {$C_1$};
    \node (f2) at (0.5,0) {$C_2$};

    \draw [->] (p) to [bend left=20] (e1);
    \draw [->] (p) to [bend left=20] (e2);
    \draw [->] (p) to [bend left=20] (e3);
    \draw [->] (p) to [bend right=20] (e1);
    \draw [->] (p) to [bend right=20] (e2);
    \draw [->] (p) to [bend right=20] (e3);
    \draw [->] (e1) to (f1);
    \draw [->] (e2) to (f1);
    \draw [->] (e3) to (f1);
    \draw [->] (e1) to (f2);
    \draw [->] (e2) to (f2);
    \draw [->] (e3) to (f2);
\end{tikzpicture}
\caption{The face category $\FF(K_{A_2})$.}
\label{fig:cat_F_A2}
\end{subfigure}
\caption{The arrangement $A_2$ in $\SSS$.}
\end{figure}
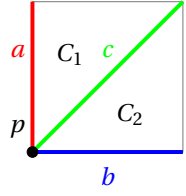
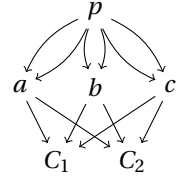
In \Cref{fig:A2_in_S1}, the corresponding arrangement in $S^1$ is represented.
The Hasse diagram of the face category $\FF(K_{A_2})$ is draw in \Cref{fig:cat_F_A2}.
\begin{figure}
\begin{tikzpicture}[y=5em]
\foreach \x in {1,2}{
    \node (p \x) at (-5+2*\x,2) {$\scriptstyle (p,C_{\x})$};
    \node (\x p) at (-1+2*\x,2) {$\scriptstyle (C_\x,p)$};
    \foreach \y/\ytext in {1/a,2/b,3/c}{
		\node (\ytext \x) at (-9.5+\y+3*\x,1)  {$\scriptstyle (\ytext,C_\x)$};
    \node (\x \ytext) at (-3.5+\y+3*\x,1)  {$\scriptstyle (C_\x,\ytext)$};}
    \foreach \y in {1,2}{
		\node (f\x \y) at (-9+2*\x+4*\y,0) {$\scriptstyle (C_\x,C_\y)$};
    }}

\foreach \x in {1,2}{
	\foreach \ytext in {a,b,c}{
    		\draw [->] (p \x) to [bend left=10] (\ytext \x);
    		\draw [->] (p \x) to [bend right=10] (\ytext \x);
		\draw [->] (\x p) to [bend left=10] (\x \ytext);
    		\draw [->] (\x p) to [bend right=10] (\x \ytext);
    		\foreach \z in {1,2}{
    			\draw [->] (\x \ytext) to  (f\x \z);
    			\draw [->] (\ytext \x) to  (f\z \x);
    }}}
\end{tikzpicture}
\caption{The category $\KK(A_2)$.}
\label{fig:cat_K_A2}
\end{figure}
The category $\KK_{A_2}$ is represented in \Cref{fig:cat_K_A2}.
\end{example}

\begin{theorem}
Let $\AAE$ be an elliptic arrangement and $\mathcal{K}$ be the associated category.
We have a homotopy equivalence
\[
\lVert \mathcal K \rVert \simeq M(\AAE).
\]
\end{theorem}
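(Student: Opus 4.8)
The plan is to transport the statement across the homeomorphism $\varphi$ of \Cref{lemma:phi} and then invoke the general deformation‑retraction result for complements of subcomplexes of polyhedral CW complexes (\Cref{lem:retract}), applied to the pair $\bigl(K_\AAA\times K_\AAA,\ \gr{\mathscr D}\bigr)$. In one line: $M(\AAE)$ is the open complement of the subcomplex $\gr{\mathscr D}$ inside $\gr{\FF\times\FF}\cong (S^1)^{2d}$, and that complement retracts onto the nerve of the opposite of the full subcategory supported on the complementary cells, which by \Cref{remop} is exactly $\KK$.

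First I would make the first step precise. The space $K_\AAA$ is a polyhedral CW complex, hence so is $K_\AAA\times K_\AAA$, with face category $\FF\times\FF$ and underlying space $(S^1)^{d+d}=\gr{\FF\times\FF}$. By \Cref{lemma:phi} the homeomorphism $\varphi\colon\EC^d\to (S^1)^{d+d}$ carries $\cup\AAE$ onto $\gr{\mathscr D}$, where $\mathscr D=(\FF\times\FF)[X]$ is the full subcategory on $X=\Ob\mathscr D=\{(F,G)\mid\supp(F)\cap\supp(G)\neq\emptyset\}$; consequently $\varphi$ restricts to a homeomorphism $M(\AAE)=\EC^d\setminus\cup\AAE\ \cong\ \gr{\FF\times\FF}\setminus\gr{\mathscr D}$.

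Next I would verify that $\mathscr D$ is genuinely a \emph{sub}complex, i.e.\ that $X$ is an order ideal (closed under sources of morphisms) in $\FF\times\FF$. This is the monotonicity of $\supp$: a morphism $F_1\to F_2$ in $\FF$ witnesses that the closed cell $\overline{F_1}$ is a face of $\overline{F_2}$, so $\overline{F_1}\subseteq\overline{F_2}$ and hence $\supp(F_2)\subseteq\supp(F_1)$; applying this coordinatewise, a morphism $(F_1,G_1)\to(F_2,G_2)$ in $\FF\times\FF$ yields $\supp(F_2)\cap\supp(G_2)\subseteq\supp(F_1)\cap\supp(G_1)$, so nonemptiness of the intersection is inherited by sources. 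Dually, the complementary object set $X^c=\Ob\KK^{op}$ is closed under targets, so the open cells indexed by $X^c$ form precisely $\gr{\FF\times\FF}\setminus\gr{\mathscr D}$. Now \Cref{lem:retract}, applied to the polyhedral CW complex $K_\AAA\times K_\AAA$ and the order ideal $X$, shows that this open complement deformation retracts onto the realization of $\bigl((\FF\times\FF)[X^c]\bigr)^{op}$; by \Cref{remop} the latter category is $(\KK^{op})^{op}=\KK$, so $\gr{\FF\times\FF}\setminus\gr{\mathscr D}\simeq\gr{\KK}$. Composing with the homeomorphism of the previous paragraph gives $\gr{\KK}\simeq M(\AAE)$.

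The step I expect to carry all the weight — and the one most dependent on the \Cref{appendix} material — is the applicability of \Cref{lem:retract} in the polyhedral (rather than simplicial or regular CW) setting, where $\FF(K)$ is only an acyclic category and one needs the analogue of Bj\"orner's identification of $\gr{\FF(K)}$ with a subdivision of $|K|$, together with the fact that the complement of a subcomplex retracts onto the ``dual'' order complex spanned by the cells not in it. The only accompanying nuisance is bookkeeping of the $(-)^{op}$'s between $\mathscr D$, $\KK$ and $\KK^{op}$; since the simplex‑reversal map gives a canonical homeomorphism $\gr{\CC}\cong\gr{\CC^{op}}$ for any acyclic category $\CC$, this orientation issue is immaterial for the homotopy statement, though it should be tracked to align with the precise wording of \Cref{lem:retract} and \Cref{remop}.
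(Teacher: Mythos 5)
Your proof is essentially the paper's proof: identify $M(\AAE)$ with $\gr{\FF\times\FF}\setminus\gr{\mathscr D}$ via the homeomorphism $\varphi$ of \Cref{lemma:phi}, then invoke \Cref{lem:retract} together with \Cref{remop} to retract onto $\gr{\KK^{op}}=\gr{\KK}$. The extra content you add — the monotonicity of $\supp$ showing $X=\Ob\mathscr D$ is closed under sources (so $\gr{\mathscr D}$ really is a subcomplex) — is a correct sanity check but is already implicit in \Cref{lemma:phi}, and \Cref{lem:retract} itself does not require $X$ to be an ideal; the wording "applied to the order ideal $X$" is slightly misleading since one feeds $X^c=\Ob\KK^{op}$ into \Cref{lem:retract} as the retained object set, but your stated conclusion and the subsequent $(-)^{op}$-bookkeeping land in the right place.
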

\begin{proof}
By \Cref{lemma:phi}, the space $M(\AAE)$ is homeomorphic, via $\varphi$, to the complement of $\lVert \mathscr D \rVert$ inside $\lVert \FF\times \FF \rVert$.
With \Cref{lem:retract} and \Cref{remop}, this space then retracts onto its subcomplex $\lVert \mathcal K^{op} \rVert=\lVert \mathcal K \rVert$. 
\end{proof}

We now prove that $\KK$ is the face category of a polyhedral CW complex, thus giving a cellularization of the space $\gr{\KK}$ that is much more economical than the triangulation coming from the definition of the nerve. We will use this for example in presenting the fundamental group. 
In order to apply the criterion given in \Cref{lem:sphere} we need to examine slice categories of $\KK$ (see \Cref{def:slice} and  \Cref{appendix} for terminology).

\begin{lemma}\label{lem:sphere_ellittico}
For any given $(F,G)\in \Ob\mathscr K$, the slice category $\mathscr K / (F,G)$ is equivalent to the face poset of a $(\codim(F)+\codim(G))$-dimensional polytope.
\end{lemma}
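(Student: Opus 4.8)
The statement is a product/local statement: near a cell $(F,G)$ of $\KK$, the combinatorics should factor into the "$F$-direction" and the "$G$-direction", each of which is the face structure of a polytope coming from the polyhedral complex $K_\AAA$. Recall that $\Mor_\KK$ is just pairs of $\FF^{op}$-morphisms, so the slice category $\KK/(F,G)$ should be the product of the slice categories $\FF^{op}/F$ and $\FF^{op}/G$ — except that the objects must satisfy the disjoint-support condition $\supp\cap\supp=\emptyset$. The plan is to first write $\KK/(F,G)$ explicitly, observe that a morphism in $\FF^{op}$ into $F$ corresponds (by the structure of a polyhedral CW complex, \Cref{appendix}) to a coface of $F$, i.e. a face of the "link polytope'' or "vertex figure'' of $F$ in $K_\AAA$; by the polyhedrality hypotheses recalled in the appendix, $\FF^{op}/F$ is the face poset of a polytope $P_F$ of dimension $\codim(F)$ (the codimension of the cell $F$ in the $d$-dimensional complex $K_\AAA$), and similarly $\FF^{op}/G \cong$ face poset of a polytope $P_G$ of dimension $\codim(G)$.

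The second step is to identify the full subcategory of $\FF^{op}/F \times \FF^{op}/G$ cut out by the disjoint-support condition. A coface $F'$ of $F$ has $\supp(F') \subseteq \supp(F)$, and similarly $\supp(G')\subseteq\supp(G)$; so requiring $\supp(F')\cap\supp(G')=\emptyset$ is a condition only involving the two faces. The key point I would establish is a \emph{separation/join} statement: the faces of $P_F$ are naturally indexed by subsets $S\subseteq \supp(F)$ (those hyperplanes whose defining equations are "kept'' when passing to the coface) that arise as support sets, and similarly for $P_G$; the disjoint-support condition then picks out exactly the pairs $(S,S')$ with $S\cap S'=\emptyset$. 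Geometrically, if we realize $P_F$ and $P_G$ with their vertices on a common affine hyperplane so that facet $i$ of $P_F$ and facet $i$ of $P_G$ "align'' for each common hyperplane $i\in\supp(F)\cap\supp(G)$, then the pairs of faces with disjoint support are exactly the faces of a single polytope obtained by a free join along the complementary facet structure. I expect this can be phrased cleanly as: $\KK/(F,G)$ is the face poset of the \emph{join} $P_F * P_G$ when $\supp(F)\cap\supp(G)=\emptyset$ — but in general one must subtract the "bad'' faces, and the claim of the lemma is that what remains is still the face poset of a polytope of the same total dimension $\codim(F)+\codim(G)$.

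For the third step I would make this concrete by choosing explicit polytopal realizations. Take $P_F$ to be (a small translate/dual of) the polytope whose facet normals are the $a_j$, $j\in\supp(F)$, inside the codimension-$\codim(F)$ normal space to $F$; do the same for $P_G$. The disjoint-support condition says a face of $P_F$ corresponding to $S\subseteq\supp(F)$ and a face of $P_G$ corresponding to $S'\subseteq\supp(G)$ are compatible iff $S\cap S'=\emptyset$; since a face of a polytope is determined by the set of facets it lies on, and the facets of $P_F \times P_G$ are $(\text{facet of }P_F)\times P_G$ and $P_F\times(\text{facet of }P_G)$, the subposet we want is the faces of $P_F\times P_G$ avoiding, for each $i\in\supp(F)\cap\supp(G)$, the "diagonal'' incidence of simultaneously lying on facet $i$ of both. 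Deleting an open face from the boundary complex of a polytope does not in general give a polytope, so the real content is to show that in our situation these deleted faces assemble into (the star of) a single face, so that the remainder is combinatorially a polytope — equivalently, that $\KK/(F,G)$ is the face poset of a \emph{vertex figure} or of an iterated \emph{wedge/join} construction.

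\textbf{Main obstacle.} The hard part is the third step: showing that after imposing disjoint supports one still gets the face poset of an honest polytope, and that its dimension is exactly $\codim(F)+\codim(G)$ rather than something smaller. I would attack this by exhibiting the polytope directly: realize the union of hyperplanes locally near $(F,G)$ inside $\R^{\codim(F)}\times\R^{\codim(G)}$, where $\mathbf H_j$ contributes, via the computation in the proof of \Cref{lemma:phi}, the \emph{pair} of conditions $a_j^T x = 0$ and $a_j^T y = 0$; the complement cell containing $\varphi(z)$ for a nearby $z\notin\cup\AAE$ is the polyhedron $\{(x,y) : a_j^T x \ge 0 \text{ or } a_j^T y \ge 0 \ \forall j\}$ intersected with the product of the two local cells — and one checks this is a polytope (a "product-with-a-corner-chopped'' construction, i.e. a cyclic-type truncation) whose face poset is exactly $\KK/(F,G)$. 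Verifying polytopality of that explicit polyhedron, and reading off its dimension, is the technical heart; everything else is bookkeeping with the definitions of $\FF$, $\KK$, slice categories, and the polyhedrality results from \Cref{appendix}.
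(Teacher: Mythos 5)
Your plan goes wrong at the start of step two, and the ``main obstacle'' you identify does not actually exist. Since $(F,G)\in\Ob\KK$, by the very definition of $\KK$ we already have $\supp(F)\cap\supp(G)=\emptyset$. You correctly observe that any coface $F'$ of $F$ (i.e., any object of $\FF^{op}/F$) satisfies $\supp(F')\subseteq\supp(F)$, and similarly $\supp(G')\subseteq\supp(G)$; but you fail to combine these, which gives $\supp(F')\cap\supp(G')\subseteq\supp(F)\cap\supp(G)=\emptyset$ for \emph{every} pair $(F',G')$ in $\FF^{op}/F\times\FF^{op}/G$. The disjoint-support condition is therefore satisfied automatically, and $\KK/(F,G)$ is the \emph{entire} product $\FF^{op}/F\times\FF^{op}/G$, not a proper subcategory. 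There are no ``bad faces'' to delete, no corners to truncate, and no join or wedge construction to prove polytopality of. Once step one is in place, the lemma follows in one line: the product of the face posets of two polytopes is the face poset of the product polytope, of dimension the sum of the two. All of the technical effort you propose for steps two and three is spent on a problem that is not there.

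Your step one also needs a sharper justification: that $\FF^{op}/F$ is the face poset of a polytope is not a formal consequence of the general polyhedral-CW machinery of the appendix (the lemma there controls the slices $\FF(K)/\alpha$, i.e.\ the boundary of a cell, not its coface structure, and the two are not interchangeable for an arbitrary polyhedral CW complex). It holds here because $K_\AAA$ is cut out by a hyperplane arrangement: one lifts to the periodic affine arrangement $\AAA^\upharpoonright$ in $\mathbb{R}^d$, where the cofaces of a face form the order-dual of the face poset of the zonotope of the hyperplanes through that face (Ziegler, Corollary 7.17), and then transports this to $\FF^{op}/F$ via the free deck-group action and the covering statement \Cref{lem:cover_cat}. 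That zonotope identification, plus the automatic-disjointness observation above, is the entire content of the proof.
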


\begin{proof}
Let $\FF^{\upharpoonright}$ denote the face category (in fact, a poset) of the polyhedral complex $K_{\AAA^\upharpoonright}$ determined by the affine, essential hyperplane arrangement $\AAA^{\upharpoonright}$ in $\mathbb R^d$. For every face $P\in \Ob \FF^{\upharpoonright}$, the poset $(\FF^{\upharpoonright})_{\geq P}$ is order dual to the poset of faces of a polytope $Z_P$ of dimension $\codim(P)$ (the {zonotope} of the arrangement of all hyperplanes containing $P$) \cite[Corollary 7.17]{Ziegler}. In particular, with \Cref{lem:CW} we have that $(\FF^\upharpoonright)^{op}$ is the face category of a polyhedral CW complex.
Recall that $K_{\AAA}$ is the quotient of $K_{\AAA^\upharpoonright}$ by a free cellular action, and $\FF$ is the quotient by the induced free action on $\FF^{\upharpoonright}$, see \Cref{rmk:quot_face_cat}. Passing to opposites, $\FF^{op}$ is the quotient by a free action on $(\FF^\upharpoonright)^{op}$, hence it is covered by the latter category in the sense of \Cref{lem:cover_cat}. In particular, $\FF^{op}/F$ is isomorphic to $(\FF^\upharpoonright)^{op}/F'$ for any lift $F'$ of $F$, and hence to the face poset of the polytope $Z_{F'}$. 
Analogously, we find a $(\codim(G))$-dimensional polytope $Z_{G'}$ such that $\mathscr F^{op} /G \simeq \mathscr F(Z_{G'})$. \\
Now, by definition ${\mathscr K / (F,G)}\simeq{\mathscr F^{op} /F} \times {\mathscr F^{op} /G}$ and with the preceding discussion we can write 
$$
{\mathscr K / (F,G)}\simeq \FF(Z_{F'})\times \FF(Z_{G'})\simeq \FF(Z_{F'}\times Z_{G'})
$$
where the product $Z_{F'}\times Z_{G'}$ is a polytope of dimension $\dim(Z_{F'})+\dim(Z_{G'})= \codim (F) +\codim(G)$, see \cite[pp.\ 9-10]{Ziegler}.
\end{proof}

\begin{proposition}
$\mathcal K$ is the face category of a polyhedral complex $K({\AAE})$.
\end{proposition}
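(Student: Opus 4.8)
The plan is to verify the hypotheses of the characterization of face categories of polyhedral CW complexes (the criterion referenced as \Cref{lem:sphere}, together with \Cref{lem:CW}) for the category $\mathcal K$. That criterion presumably states that an acyclic category is the face category of a polyhedral CW complex precisely when, for every object $x$, the slice category $\mathcal K/x$ is isomorphic to the face poset of a polytope (or, dually, when every interval/slice is the face lattice of a polytope and some compatibility holds). Since \Cref{lem:sphere_ellittico} already establishes exactly that each slice $\mathscr K/(F,G)$ is equivalent to the face poset of the polytope $Z_{F'}\times Z_{G'}$ of dimension $\codim(F)+\codim(G)$, the core combinatorial input is in hand.

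First I would record that $\mathcal K$ is a finite acyclic category: it is finite because $\FF$ is finite (it is the face category of a finite cellularization of the compact torus $(S^1)^d$), and it has no nontrivial automorphisms or loops because its morphisms come from $\FF^{op}\times\FF^{op}$, which is acyclic as $\FF$ is the face category of a regular CW complex. Next I would invoke \Cref{lem:sphere} (the announced characterization): having checked via \Cref{lem:sphere_ellittico} that every slice $\mathcal K/(F,G)$ is the face poset of a polytope, the lemma produces a polyhedral CW complex $K(\AAE)$ whose face category is $\mathcal K$. I would also note that the dimension bookkeeping is consistent: the cell indexed by $(F,G)$ has dimension $\codim(F)+\codim(G)$, which matches the dimension of $M(\AAE)=\mathbb E^d\setminus\cup\AAE$ being $2d$ when $F,G$ are chambers, and is coherent with the identification $M(\AAE)\simeq\|\mathcal K\|$ from the preceding theorem.

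The one point requiring care — and the step I expect to be the main obstacle — is the \emph{gluing/compatibility} condition in \Cref{lem:sphere}, i.e.\ that the attaching data of the would-be polytopal cells is consistent along faces: when $(F',G')\to(F,G)$ is a morphism in $\mathcal K$, the inclusion of slices $\mathcal K/(F',G')\hookrightarrow\mathcal K/(F,G)$ must realize $Z_{F''}\times Z_{G''}$ as a genuine face of $Z_{F'}\times Z_{G'}$. This follows because faces of a product of polytopes are exactly products of faces \cite[pp.\ 9--10]{Ziegler}, and on each factor the corresponding statement is the analogous compatibility for the zonotopal face posets $(\FF^{\upharpoonright})^{op}$, which holds since $(\FF^{\upharpoonright})^{op}$ is already known to be the face category of a polyhedral CW complex (the cellularization of $\mathbb R^d$ by the arrangement $\AAA^{\upharpoonright}$, dualized) and this property descends to the quotient $\FF^{op}$ by the free $\mathbb Z^d$-action via \Cref{lem:cover_cat}. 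I would therefore assemble the argument as: (i) $\mathcal K$ is finite acyclic; (ii) each slice is a polytopal face poset by \Cref{lem:sphere_ellittico}; (iii) the face-compatibility holds factorwise and hence for the products; (iv) apply \Cref{lem:sphere} to conclude the existence of $K(\AAE)$.
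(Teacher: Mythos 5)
Your proposal follows the paper's own argument: combine \Cref{lem:sphere_ellittico} (every slice of $\mathcal K$ is the face poset of a polytope) with \Cref{lem:CW} to conclude. Two small points. First, the lemma you want to invoke in step (iv) is \Cref{lem:CW}, not \Cref{lem:sphere} (the latter is the converse implication, for a category already known to be a face category); this echoes a loose reference in the paper's own lead-in, but your description of the criterion shows you mean \Cref{lem:CW}. Second, your step (iii) — verifying a ``gluing/compatibility'' of the polytopal cells along faces — is not a hypothesis of \Cref{lem:CW}: the lemma assumes only that every slice is the face category of a polytope, and the attaching-data compatibility is established inside its proof (via the inclusion functors $i_{yx}$ between slices). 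Your factorwise/zonotopal reasoning in (iii) is not wrong, but it re-derives something the lemma already provides, so the proof reduces to exactly what the paper states: \Cref{lem:sphere_ellittico} plus \Cref{lem:CW}.
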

\begin{proof}
The claim follows from \Cref{lem:sphere_ellittico} and \Cref{lem:CW}. 
\end{proof}

\begin{remark}\label{lty} We point out that along the same lines (i.e, via \Cref{lem:retract,lem:CW}) it is possible to describe an acyclic category and a polyhedral complex with the homotopy type of the complement of any of the  arrangements in $((S^1)^p\times \mathbb R^q)$ considered by Liu, Tran and Yoshinaga \cite{LiuTranYoshinaga}, generalizing known models for complexified hyperplane arrangements \cite{salvetti}, generalized configuration spaces \cite{Salvetti-Mori}, and toric arrangements \cite{DD1}. We do not pursue this line any further here.
\end{remark}

\begin{remark}\label{approach}
When the hypersurfaces in $\AAA_{\mathbb E}$ are  defined by equations of the form $\sum_k a_{k}z_k = c$ with $a_k\in \operatorname{End}\mathbb E\subseteq \mathbb C$ for all $k$ and $c\in \mathbb E$ arbitrary (see \Cref{generality}), the arrangement $\AAA_{\mathbb E}$ can still be lifted to an arrangement of (affine, non-complexified) hyperplanes in the universal cover $\mathbb C^d$ of $\mathbb E^d$. A cellular model for $M(\AAA_{\mathbb E})$ can be obtained as a quotient of the  Bj\"orner-Ziegler model \cite{BiZi} for the lifted arrangement,  using the tools outlined in the Appendix. We leave this line of thought for further research.
\end{remark}

\section{The fundamental group}
\label{sect:fundamental_group}

If $\CCC$ is the face category of a polyhedral CW complex $K$ we let $\CCC^i$ be the set of all objects of $\CCC$ corresponding to faces of $K$ of dimension $i$.
We continue writing simply $\FF$ for the face category $\FF(\AAA)$ of the rank $d$ essential toric arrangement $\AAA$ associated to the elliptic arrangement $\AAA_{\mathbb{E}}$. 

Moreover, we borrow some standard terminology from hyperplane arrangements and call {\em chambers}, resp.\ {\em walls} the elements of $\FF^d$, resp.\ $\FF^{d-1}$

Since every wall is in the boundary of two chambers (or of one chamber in two different ways), we can define the graph $G_\AAA$ whose vertices are the chambers and whose edges (or loops) are the walls.
We fix an orientation of $G_\AAA$ and choose a rooted spanning tree $T \subseteq  \FF^{d-1}$ in the connected graph $G_\AAA$.
Call $C_0$ the chamber corresponding to the root of the tree.

We define a tree $T_\KK$ in the $1$-skeleton of the polyhedral CW complex $K({\AAE})$ whose set of edges is 
\[ T_\KK  := \set{(t,C_0)\}_{t \in T} \sqcup \{(C,t)}_{t \in T, C\in \FF^d},\]
where we denote cells of $K({\AAE})$ (\ie objects of $\KK\subseteq \FF^2$) by pairs of objects in $\FF$.

\begin{lemma}
\label{lemma:tree_T_K}
The subgraph $T_\KK$ is a spanning tree in the $1$-skeleton of $K({\AAE})$.
\end{lemma}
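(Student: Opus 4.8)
The plan is to verify the two defining properties of a spanning tree: that $T_\KK$ spans all vertices (the $0$-cells of $K(\AAE)$, i.e.\ the objects $(C,C')\in\KK$ with $C,C'\in\FF^d$), and that it is connected and acyclic (equivalently, connected with exactly one fewer edge than vertex). Recall that the $0$-cells of $K(\AAE)$ are pairs $(C,C')$ of chambers of $\FF$ with $\supp(C)\cap\supp(C')=\emptyset$; since chambers are top-dimensional, $\supp(C)=\supp(C')=\emptyset$, so in fact \emph{every} pair $(C,C')\in\FF^d\times\FF^d$ is a $0$-cell of $K(\AAE)$. Similarly the $1$-cells of $K(\AAE)$ through which $T_\KK$ runs are of the form $(t,C)$ or $(C,t)$ with $t\in\FF^{d-1}$ a wall and $C$ a chamber; such a pair lies in $\KK$ because $\supp(t)\cap\supp(C)=\supp(t)\cap\emptyset=\emptyset$, so the edge set of $T_\KK$ is legitimately a subgraph of the $1$-skeleton.

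First I would describe the combinatorics of an edge $(t,C_0)$ with $t\in T$: in $G_\AAA$ the wall $t$ is incident to two chambers $C',C''$ (possibly $C'=C''$ if $t$ is a loop, but a tree edge is never a loop), so in $K(\AAE)$ the $1$-cell $(t,C_0)$ has the two $0$-cells $(C',C_0)$ and $(C'',C_0)$ in its boundary. Hence the subgraph $\{(t,C_0)\}_{t\in T}$ is isomorphic to the tree $T$ embedded as $\{(C,C_0)\mid C\in\FF^d\}$: it is a spanning tree of the ``slice'' $\FF^d\times\{C_0\}$, connecting every $(C,C_0)$ to the base vertex $(C_0,C_0)$. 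Next, for a fixed chamber $C\in\FF^d$ and a tree edge $t\in T$ incident to chambers $C',C''$ in $\FF$, the $1$-cell $(C,t)$ has boundary $0$-cells $(C,C')$ and $(C,C'')$; thus for each fixed $C$ the subgraph $\{(C,t)\}_{t\in T}$ is again a copy of $T$, spanning the slice $\{C\}\times\FF^d$ and connecting every $(C,C')$ to $(C,C_0)$.

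Putting these together: every vertex $(C,C')$ is joined by a path in $\{(C,t)\}_{t\in T}$ to $(C,C_0)$, which is joined by a path in $\{(t,C_0)\}_{t\in T}$ to $(C_0,C_0)$; so $T_\KK$ is connected and spans all $|\FF^d|^2$ vertices. For acyclicity I would count edges. The set $\{(t,C_0)\}_{t\in T}$ has $|T|=|\FF^d|-1$ edges. The set $\{(C,t)\mid t\in T,\ C\in\FF^d\}$ has $|\FF^d|\cdot|T|=|\FF^d|(|\FF^d|-1)$ edges, and these two sets are disjoint (an edge of the first type has a wall in the first coordinate, an edge of the second type has a wall in the second coordinate, and a cell cannot be simultaneously a wall-times-chamber and a chamber-times-wall cell since chambers and walls have different dimensions). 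The total is
\[
(|\FF^d|-1) + |\FF^d|(|\FF^d|-1) = (|\FF^d|-1)(|\FF^d|+1) = |\FF^d|^2 - 1,
\]
which is exactly one less than the number of vertices. A connected graph on $N$ vertices with $N-1$ edges is a tree, so $T_\KK$ is a spanning tree of the $1$-skeleton of $K(\AAE)$.

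The main obstacle is nothing deep but rather bookkeeping: one must be careful that the edges listed in the two families are genuinely distinct $1$-cells of $K(\AAE)$ and that no edge is counted twice (in particular when $C=C_0$, the cell $(C_0,t)$ is still distinct from every $(t',C_0)$), and one must confirm that the boundary of a $1$-cell $(t,C)$ in the polyhedral complex $K(\AAE)$ is indeed the pair of $0$-cells described — this is where one invokes \Cref{lem:sphere_ellittico}, which identifies the slice $\KK/(F,G)$ with the face poset of $Z_{F'}\times Z_{G'}$, so that the two facets of the $1$-dimensional polytope $Z_{t'}\times Z_{C'}\cong[0,1]$ correspond to the two chambers adjacent to $t$, paired with $C$. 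Once the incidence structure is pinned down, connectivity and the edge count finish the proof.
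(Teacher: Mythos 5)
Your proposal is correct and follows essentially the same strategy as the paper: count that $T_\KK$ has $f_d^2-1$ edges on $f_d^2$ vertices, check connectivity via the two ``slices'' $\{(C,t)\}$ and $\{(t,C_0)\}$, and conclude it is a spanning tree. The paper states the connectivity as ``easily seen,'' which is precisely the two-step path argument you spell out, so you have just filled in the details the authors chose to leave implicit.
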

\begin{proof}
Let $f_d$  be the cardinality of $\FF^d$.
The cardinality of $\KK^0$ is $f_d^2$ and the cardinality of $T_\KK$ is $f_d^2-1$.   The graph induced by $T_\KK$ is easily seen to be connected and, thus, it is a spanning tree in the $1$-skeleton of $K({\AAE})$.
\end{proof}

Choose an arbitrary orientation of every edge of the graph $G_\AAA$ defined above. This induces an orientation of the edges in the $1$-skeleton of $K({\AAE})$. Indeed, such edges are pairs $(C,a)$ or $(a,C)$, where $a$ is an edge of $G_\AAA$ and $C\in \FF^{d}$. If we write $A_1,A_2$ for the tail, resp.\ head of $a$ in $G_\AAA$, then we may consider $(a,C)$ as being oriented from $(A_1,C)$ to $(A_2,C)$, and analogously $(C,a)$ from $(C,A_1)$ to $(C,A_2)$.  

A set of generators for the fundamental group $\pi_1(M(\AAA_{\mathbb{E}}))$ is given by 
the closed walks $\gamma_e$ in $T_\KK$, one for each oriented edge $e \in \KK^1 \setminus T_\KK$, where $\gamma_e$ is the concatenation of the path in $T_\KK$ from $(C_0,C_0)$ to the tail end of $e$, $e$ itself, and the path in $T_\KK$ from the head of $e$ to $(C_0,C_0)$. 
This generators can be grouped into three types:
\begin{enumerate}[(i)]
\item $(a,C_0)$ for $a\in \FF^{d-1}(\AAA) \setminus T$,
\item $(a,C_i)$ for $i \neq 0$ and $a\in \FF^{d-1}(\AAA)$,
\item $(C_i, a)$ for $a\in \FF^{d-1}(\AAA) \setminus T$ and all $i$.
\end{enumerate}

The relations are given by the elements in $\KK^2$, i.e., all pairs of the form $(p,C_i)$, $(C_i, p)$ and $(a,b)$ with $p\in \FF^{d-2}$, $a,b\in \FF^{d-1}$ such that $\supp(a) \cap \supp(b) = \emptyset$.

In the following, we set the element $(a,C_i)=1$ if it does not belong to our set of generators.
The element $(a,b)$ gives the relation
\begin{equation}\label{rel1}
(A_0,b)(a,B_1)=(a,B_0)(A_1,b),
\end{equation}
where $a$ is oriented from $A_0$ to $A_1$ and $b$ from $B_0$ to $B_1$, see \Cref{fig:rel1}.

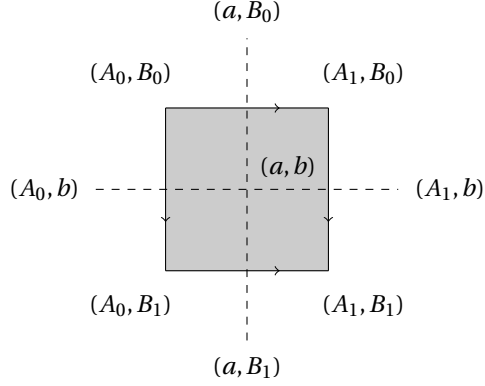
\begin{figure}
\centering
\begin{tikzpicture}[x=5em, y=5em]
\node (Q1) at (1,1) {$(A_1,B_0)$};
\node (Q2) at (-1,1) {$(A_0,B_0)$};
\node (Q3) at (-1,-1) {$(A_0,B_1)$};
\node (Q4) at (1,-1) {$(A_1,B_1)$};
\draw[dashed] (-1.3,0) node[label={180:$(A_0,b)$}]{} -- (1.3,0) node[label={0:$(A_1,b)$}]{};
\draw[dashed] (0,-1.3) node[label={-90:$(a,B_1)$}]{} -- (0,1.3) node[label={90:$(a,B_0)$}]{};
\coordinate (P1) at (-0.7,0.7);
\coordinate (P2) at (0.7,0.7);
\coordinate (P3) at (0.7,-0.7);
\coordinate (P4) at (-0.7,-0.7);
\draw[->-] (P1) -- (P2); 
\draw[->-] (P2) -- (P3);
\draw[->-] (P1) -- (P4); 
\draw[->-] (P4) -- (P3);
\draw[fill=black, opacity=0.2] (P1) -- (P2) -- (P3) -- (P4);
\node [circle, inner sep=0pt, minimum size=4pt, label={45:$(a,b)$}] (O) at (0,0) {};
\end{tikzpicture}
\caption{The cells of $\KK$ contained in $(a,b)$}
\label{fig:rel1}
\end{figure}

The relations for the elements $(p,C)$ and $(C, p)$ are analogous to those in the presentation of the fundamental groups of complements of complexified  hyperplane arrangements given, e.g, in \cite{salvetti}.
Let $a_1, a_2, \dots, a_k$ be a counter-clockwise ordering of the walls containing $p$.
Set $n_j$ equal $1$ if $a_j$ is oriented counter-clockwise and equal $-1$ otherwise.
The relations given by $(p,C)$ and $(C,p)$ are, respectively,
\begin{equation}\label{rel2}
 \prod_{j=1}^k (a_j,C)^{n_j} = 1\quad
 (\textrm{from } (p,C)), \quad\quad\quad  
 \prod_{j=1}^k (C,a_j)^{n_j} = 1\quad
 (\textrm{from } (C,p)),
\end{equation}
as shown in \Cref{fig:rel2}.

\begin{figure}
\centering
\begin{tikzpicture}[x=6em, y=6em]
\node [circle, inner sep=0pt, minimum size=4pt, label={-20:$(p,C)$}] (O) at (0.1,0) {};
\draw[dashed] (-1,0) node[label={180:$(a_1,C)$}]{} -- (1,0) node[label={0:$(a_{i},C)$}]{};
\draw[dashed] (-.8,.8) node[label={135:$(a_2,C)$}]{} -- (.8,-.8) node[label={-45:$(a_{i+1},C)$}]{};
\draw[dashed] (0,1) node[label={90:$(a_3,C)$}]{} -- (0,-1) node[label={-90:$(a_{i+2},C)$}]{};
\draw[fill=black,opacity=0.2] (-0.7,0.3) -- (-0.3,0.7) -- (-0.3,0.7) -- (0.3,0.7) -- (0.3,0.7) -- (0.7,0.3) -- (0.7,0.3) -- (0.7,-0.3) -- (0.7,-0.3) -- (0.3,-0.7) -- (0.3,-0.7) -- (-0.3,-0.7) -- (-0.3,-0.7) -- (-0.7,-0.3) --  (-0.7,-0.3) -- (-0.7,0.3);
\draw[->-] (-0.7,0.3) -- (-0.3,0.7);
\draw[->-] (-0.3,0.7) -- (0.3,0.7);
\draw[->-] (-0.7,-0.3) -- (-0.7,0.3);
\draw[->-] (0.3,-0.7) -- (-0.3,-0.7);
\draw[->-] (0.7,-0.3) -- (0.3,-0.7);
\draw[->-] (0.7,0.3) -- (0.7,-0.3);
\draw[dotted] (0.3,0.7) -- (0.7,0.3);
\draw[dotted] (-0.3,-0.7) -- (-0.7,-0.3);
\draw[dashed] (0.6,.9) -- (-0.6,-.9);
\draw[dashed] (.9,0.6) -- (-.9,-0.6);
\draw[dotted] (0.6,0.9) to [bend left=20] (0.9,0.6);
\draw[dotted] (-0.6,-0.9) to [bend left=20] (-0.9,-0.6);
\end{tikzpicture}
\caption{The cells of $\KK$ contained in $(p,C)$}
\label{fig:rel2}
\end{figure}
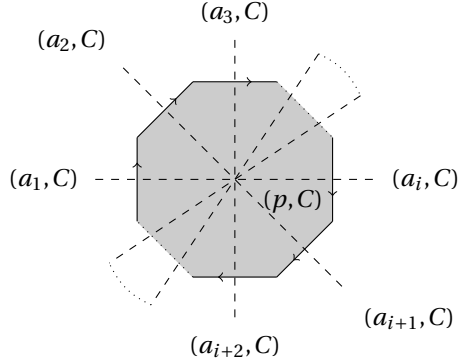

\bigskip

We summarize the discussion of this section as follows.

\begin{theorem} \label{thm:pres}
A presentation for the fundamental group of the complement of an elliptic arrangement is given by generators of the form (i), (ii), (iii) above, with relations of type \eqref{rel1}, \eqref{rel2} above. \hfill\qedsymbol
\end{theorem}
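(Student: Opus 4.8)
The plan is to read the presentation off the polyhedral CW structure $K(\AAE)$, using the standard description of the fundamental group of a CW complex in terms of a spanning tree of its $1$-skeleton and the attaching maps of its $2$-cells (see, e.g., \cite{HatcherBook}). Recall that, by the preceding Proposition and \Cref{lem:sphere_ellittico}, $K(\AAE)$ is a polyhedral complex with face category $\KK$, and by the homotopy equivalence $\gr{\KK}\simeq M(\AAE)$ established in \Cref{models} we have that $\pi_1(M(\AAE))$ is computed from the $2$-skeleton of $K(\AAE)$. First I would fix the spanning tree $T_\KK$ supplied by \Cref{lemma:tree_T_K}; then $\pi_1(M(\AAE))$ is the group generated by the oriented $1$-cells in $\KK^1\setminus T_\KK$, with one relator for each $2$-cell obtained by reading its boundary, and with the edges of $T_\KK$ contracted (read as $1$). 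Since $K(\AAE)$ is a polyhedral complex, the attaching map of each $2$-cell is an embedding of the boundary circle onto a polygon, each of whose edges is traversed exactly once, so every relator is a genuine word in the generators (well defined up to inversion and conjugacy, which is all that matters for the normal closure).

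The bookkeeping step is the enumeration of cells. A cell $(F,G)\in\Ob\KK$ has dimension $\codim F+\codim G$ by \Cref{lem:sphere_ellittico}, so $\KK^0$ is indexed by $\FF^d\times\FF^d$; the elements of $\KK^1$ are the pairs $(a,C)$ and $(C,a)$ with $a$ a wall and $C$ a chamber (the support condition is automatic, since $\supp(C)=\emptyset$); and $\KK^2$ consists of the pairs $(p,C)$ and $(C,p)$ with $p\in\FF^{d-2}$, together with the pairs $(a,b)$, $a,b\in\FF^{d-1}$, $\supp(a)\cap\supp(b)=\emptyset$. Comparing $\KK^1$ with $T_\KK=\{(t,C_0)\}_{t\in T}\sqcup\{(C,t)\}_{t\in T,\,C\in\FF^d}$, an edge $(a,C)$ lies outside $T_\KK$ precisely when $a\notin T$ or $C\neq C_0$ — which are exactly the generators of types (i) and (ii) — and an edge $(C,a)$ lies outside $T_\KK$ precisely when $a\notin T$ — exactly type (iii). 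Thus (i)--(iii) form a free basis for the fundamental group of the $1$-skeleton, and the convention $(a,C_i)=1$ for non-generators correctly records the contracted tree edges.

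It then remains to read the three families of relators off the $2$-cells, using the product description of slices in \Cref{lem:sphere_ellittico}. For $(a,b)$ the slice $\KK/(a,b)$ is the face poset of a product of two segments, i.e.\ a quadrilateral with vertices $(A_i,B_j)$ and edges $(a,B_0),(a,B_1),(A_0,b),(A_1,b)$; reading its boundary loop with the chosen orientations yields the relator $(a,B_0)(A_1,b)(a,B_1)^{-1}(A_0,b)^{-1}$, which rearranges to \eqref{rel1} (see \Cref{fig:rel1}). For $(p,C)$, since $\codim(C)=0$, the slice $\KK/(p,C)$ is the face poset of a $2$-dimensional polytope — a polygon whose boundary edges are exactly the cells $(a_j,C)$, where $a_1,\dots,a_k$ are the walls containing $p$ in counter-clockwise order — so its boundary word is $\prod_{j=1}^{k}(a_j,C)^{n_j}$; the symmetric computation for $(C,p)$ gives $\prod_{j=1}^{k}(C,a_j)^{n_j}$, and together these are \eqref{rel2} (see \Cref{fig:rel2}). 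Collecting all these relators over the $2$-cells and invoking the presentation theorem finishes the proof. I expect the only genuinely delicate point to be the identification of the boundary combinatorics and orientation data of the $2$-cells $(p,C)$ and $(a,b)$ — checking that the local product picture of \Cref{lem:sphere_ellittico} matches Figures \ref{fig:rel1} and \ref{fig:rel2}, and that the signs $n_j$ and the tail/head conventions are consistent — after which everything reduces to the routine CW computation of $\pi_1$.
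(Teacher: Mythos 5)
Your proposal is correct and follows essentially the same route as the paper: the theorem is stated as a summary of the preceding discussion in Section~\ref{sect:fundamental_group}, which is exactly the argument you reconstruct — use the homotopy equivalence $\gr{\KK}\simeq M(\AAE)$ and the polyhedral CW structure $K(\AAE)$ with face category $\KK$, take the spanning tree $T_\KK$ of Lemma~\ref{lemma:tree_T_K}, identify the generators as $\KK^1\setminus T_\KK$ (giving types (i)--(iii)), and read relators \eqref{rel1} and \eqref{rel2} off the boundaries of the $2$-cells via the product description of slices in Lemma~\ref{lem:sphere_ellittico}.
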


\section{Example: The Coxeter case \texorpdfstring{$A_n$}{An}}
\label{An}

\subsection{Cyclic partitions}
\begin{definition}
A \textit{cyclic partition} of $n$ is a partition of the set $\set{0,1, \dots,n}$ whose set of  blocks is cyclically ordered.
\end{definition}

We represent a cyclic partition with $k$ blocks by cyclic strings with $k$ vertical separators, which we write in line by thinking of the cyclic string being "cut'' immediately after a separator. This way we will identify two "linearized" strings if they differ by a cyclic permutation.

\begin{example}
The cyclic partitions of $2$ can be represented as follows: 
\begin{align*}
& 0|1|2| \;\; (\simeq 1|2|0| \simeq 2|0|1|) & & 0|2|1| & & 012| \\
& 01|2| & & 02|1| & & 0|12|.
\end{align*}
\end{example}

The \textit{category of cyclic partitions} of $n$ is the acyclic category (see \Cref{app:cat}) whose objects are the cyclic partitions of $n$ and the morphisms are defined in the following way. 
The set $\Mor (x,y)$ is non-empty if and only if each block of $x$ is the union of  adjacent blocks of $y$. If $x$ has $k$ blocks and $y$ has $h$ blocks, the elements of $\Mor(x,y)$ are the different sets of $h-k$ separators in $y$ whose removal from $y$ leaves $x$. Composing two morphisms corresponds to taking union of the separator sets.
Notice that $\lvert \Mor (x,y) \rvert \leq 1$ for $x \neq 01\dots n|$. Moreover, $\lvert \Mor (01\dots n|,y) \rvert$ is equal to the number of blocks in $y$.

\begin{example}
The category of cyclic partitions of $2$ is depicted in \Cref{fig:cat_faces_A2}.
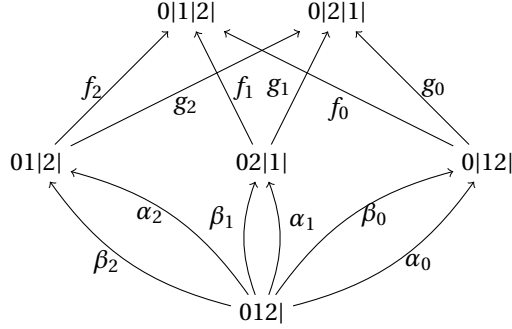
\begin{figure}
\centering
\begin{tikzpicture}
    \node (p) at (0,0) {$012|$};
    \node (e1) at (-3,2)  {$01|2|$};
    \node (e2) at (0,2) {$02|1|$};
    \node (e3) at (3,2) {$0|12|$};
    \node (f1) at (-1,4) {$0|1|2|$};
    \node (f2) at (1,4) {$0|2|1|$};

    \draw [->] (p) to [bend left=20] node [left] {$\beta_2$} (e1);
    \draw [->] (p) to [bend left=20] node [above left] {$\beta_1$} (e2);
    \draw [->] (p) to [bend left=20] node [right] {$\beta_0$} (e3);
    \draw [->] (p) to [bend right=20] node [left] {$\alpha_2$} (e1);
    \draw [->] (p) to [bend right=20] node [above right] {$\alpha_1$} (e2);
    \draw [->] (p) to [bend right=20] node [right] {$\alpha_0$} (e3);
    \draw [->] (e1) to node [left] {$f_2$} (f1);
    \draw [->] (e2) to node [right] {$f_1$} (f1);
    \draw [->] (e3) to node [below] {$f_0$} (f1);
    \draw [->] (e1) to node [below] {$g_2$} (f2);
    \draw [->] (e2) to node [left] {$g_1$} (f2);
    \draw [->] (e3) to node [right] {$g_0$} (f2);
\end{tikzpicture}
\caption{The category of cyclic partition of $2$.}
\label{fig:cat_faces_A2}
\end{figure}
Notice that the maps $\alpha_i$ corresponds to removal of the separator before $i$ (i.e., removing ``$\ast |i$''), $\beta_i$ to removal of  the separator after $i$ (i.e., removing ``$i|\ast$'').
The maps $f_i$ and $g_i$ are associated with removing the only separator not adjacent to $i$. 
For instance, note that the compositions $f_0 \beta_0$ and $f_2 \alpha_2$ are equal since both maps remove the set of separators $\{0|1, 1|2\}$ from $0|1|2|$. We also have $f_0 \alpha_0 = f_1 \beta_1$, $f_1 \alpha_1= f_2 \beta_2$, $g_0 \beta_0 = g_1 \alpha_1$, $g_0 \alpha_0 = g_2 \beta_2$, $g_2 \alpha_2= g_1 \beta_1$.
\end{example}

\begin{lemma}
The face category $\FF(A_{n})$ is isomorphic to the category of cyclic partitions of $n$.
\end{lemma}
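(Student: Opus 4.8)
The plan is to exhibit an explicit isomorphism of categories between $\FF(A_n)$ — the face category of the polyhedral CW structure on $(S^1)^d$ coming from the (essential) toric arrangement of Coxeter type $A_n$, with $d=n$ — and the category of cyclic partitions of $n$. Recall that the toric arrangement of type $A_n$ is obtained by lifting the braid-type arrangement in $(S^1)^{n+1}$ given by the hyperplanes $x_i = x_j$ to an essential arrangement in $(S^1)^n$ via the section $x_0 = 0$; equivalently, one may work ``upstairs'' on the periodic hyperplane arrangement $\AAA^\upharpoonright$ in $\mathbb R^{n+1}$ (with coordinates $x_0, \ldots, x_n$, all hyperplanes $x_i = x_j + m$ for $m \in \Z$) and then quotient by the weight-lattice action. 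By \Cref{rmk:quot_face_cat}, $\FF(A_n) \simeq \FF(K_{\AAA^\upharpoonright})/\Z^n$, so it suffices to understand faces of the periodic arrangement together with this group action.

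First I would describe the faces of $\AAA^\upharpoonright$ combinatorially. A point $(x_0, \ldots, x_n) \in \mathbb R^{n+1}$ determines, by reading off the cyclic order of the values $x_i \bmod 1$ on the circle $\mathbb R/\Z$ together with the coincidence pattern, exactly a cyclic partition of $\{0, 1, \ldots, n\}$: the blocks are the maximal sets of indices with equal fractional part, and the cyclic order of blocks is the order in which they appear going around $\mathbb R/\Z$. Two points lie in the (relatively open) face of $K_{\AAA^\upharpoonright}$ if and only if they give the same such data, and faces glue exactly when a face's data is a ``coarsening by merging adjacent blocks'' of the larger face's data — which is precisely the morphism relation in the category of cyclic partitions (a set of $h-k$ separators to delete, with composition given by union of separator sets). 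The point of the full cyclic string $012\cdots n|$ having $k$ morphisms into a $k$-block cyclic partition — rather than a unique one — reflects the fact that the minimal (open) cell corresponding to ``all coordinates equal mod $1$'' is a point that sits on a codimension-$(n-1)$... actually a $0$-cell... hmm, one must be careful here: the cell $012\cdots n|$ is the full-dimensional chamber downstairs is wrong; rather it corresponds to the point where all $x_i$ agree mod $1$, which on the quotient torus is a single $0$-dimensional cell, and it has multiple morphisms to a given coarser stratum because the attaching map of a cell can wrap so that several distinct faces of the cell's boundary map to the same cell of the complex — this is exactly the phenomenon that makes $\FF$ an acyclic category rather than a poset, and it matches $\lvert\Mor(01\cdots n|, y)\rvert = \#\text{blocks}(y)$.

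The key steps, in order: (1) set up the periodic arrangement $\AAA^\upharpoonright$ and its face poset $\FF^\upharpoonright$, and identify the faces of $\AAA^\upharpoonright$ with pairs (cyclic partition of $\{0,\ldots,n\}$, a lift of each fractional part to $\mathbb R$) — i.e., a cyclic partition together with the integer ``winding'' data; (2) show $\FF^\upharpoonright$ as a poset is isomorphic to the poset of such decorated cyclic partitions, with order given by merging adjacent blocks; (3) identify the weight-lattice $\Z^n$-action (after the $x_0 = 0$ normalization) with the action that shifts the winding data, and check it is free on cells; (4) take the quotient: the objects of $\FF^\upharpoonright/\Z^n$ become plain cyclic partitions (the winding data is killed), and the morphism sets become the sets of adjacent-separator-deletions, with the multiplicity in $\Mor(01\cdots n|, y)$ arising because the $n$ boundary faces of a lift of $01\cdots n|$ that are translates of one another collapse to the same object $y$ after quotienting by $\Z^n$; (5) check functoriality of the identification (composition = union of separator sets on both sides) and that it is bijective on objects and on each Hom-set, hence an isomorphism of categories.

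The main obstacle I expect is step (4), specifically getting the morphism multiplicities right after quotienting. In a poset quotient by a free action, $\Mor(\bar F, \bar G)$ becomes the set of $\Z^n$-orbits of pairs $(F', G')$ with $F' \geq G'$ lying over $(\bar F, \bar G)$; fixing a lift $F'$, this is the set of lifts $G'$ of $\bar G$ with $G' \leq F'$, modulo the stabilizer of $F'$ (trivial, by freeness). So the count is: how many faces in the closure of a fixed lift of the ``all-equal-mod-$1$'' point $01\cdots n|$ map down to a given cyclic partition $y$ with $k$ blocks? Going around the circle once, there are exactly $k$ ways to place the block pattern of $y$ so that it is a valid ``spreading out'' of the single point $F'$ into $k$ distinct nearby levels — equivalently, $k$ rotational placements — giving $\lvert\Mor(01\cdots n|, y)\rvert = k$, matching the claimed formula; and for $x \neq 01\cdots n|$ the analogous count is at most $1$ because the block structure of $x$ already breaks the rotational symmetry. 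Carefully verifying this combinatorial count — and that composition of these quotient-morphisms corresponds to union of separator sets — is the technical heart; everything else is bookkeeping of standard facts about zonotopal/braid-arrangement face posets (as in \cite{Ziegler}) and quotients of face categories by free actions (as in \Cref{rmk:quot_face_cat} and the appendix).
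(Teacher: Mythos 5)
Your strategy differs from the paper's in one structural respect: the paper routes through Aguiar and Petersen's published description of the $\widetilde{A}_n$ arrangement modulo the \emph{coroot} lattice in terms of ``spin necklaces'' (cyclic partitions with a distinguished block labeled in $\mathbb{Z}/(n{+}1)$), and then takes the remaining quotient by the weight lattice, which acts only on the labels. You instead work directly from the periodic affine arrangement $\AAA^\upharpoonright$, encoding its faces as cyclic partitions decorated with integer winding data, and quotient by $\mathbb{Z}^n$ in a single step. Both roads pass through the same combinatorial core — correctly identified in your step (4): morphisms $\bar F\to\bar G$ in the quotient are orbits of covering relations $F'\lessdot G'$ upstairs, and when $\bar F=01\cdots n|$ (the unique $0$-cell, the minimum object) there are exactly $k$ lifts $G'$ of a $k$-block partition in the star of a fixed lift $F'$, by rotational freedom, while for $\bar F$ with $\geq 2$ blocks the cyclic order already rigidifies the placement and the count drops to one. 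The paper's route is shorter because the face description is imported from a reference; yours is more self-contained but concentrates the real work in your steps (1)–(2).

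A few slips to clean up before this is airtight. (i) The periodic arrangement should live in $\mathbb{R}^n$ (the essentialized slice $x_0=0$), not $\mathbb{R}^{n+1}$; a rank-$n$ lattice cannot act cocompactly on $\mathbb{R}^{n+1}$. (ii) In the orbit count you write $F'\geq G'$; since face-category morphisms go from a cell to the cells containing it in their closure, you want lifts $G'$ of $y$ with $F'\subseteq\overline{G'}$, $F'$ a lift of the $0$-cell. (iii) ``The $n$ boundary faces of a lift of $01\cdots n|$'' is not right — a $0$-cell has no boundary; you mean the $k$ cells in the \emph{star} of the lift that descend to $y$, as your next sentence (``$k$ rotational placements'') correctly states. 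Finally, the mid-paragraph aside (``actually a $0$-cell\ldots hmm'') should be replaced by the clean assertion that $012\cdots n|$ is the unique $0$-cell and minimum of $\FF(A_n)$, with morphism multiplicities recording how many times it appears in the boundary of each higher cell.
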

\begin{proof} 
Aguiar and Petersen describe the face structure of their notion of toric arrangement of Coxeter type $A_{n}$ (described in Example \ref{es:1}) in terms of "spin necklaces". Slightly paraphrasing \cite[§5.3]{AgPe}, the data of a spin necklace is a cyclically ordered partition of the set $[n+1]$ with a distinguished block that is labeled by an integer in $[n+1]$. Such necklaces describe orbits of faces of the reflection arrangement of type $\widetilde{A_{n}}$ under the action of the coroot lattice, via the explicit description given in \cite[p.\ 1166]{AgPe}. From there, we see that the action of the weight lattice on the arrangement of type $\widetilde{A_{n}}$ induces an action on spin necklaces that preserves the cyclic partition and the order among its blocks, but is transitive on the possible labelings. In particular, if the spin necklace has only one block and its label is $l$, then for every $k=1,\ldots n$ the $k$-th fundamental weight acts on this spin necklace by relabeling the block with $l-k {\mod n+1}$. 

The boundary of the face described by a given spin necklace is obtained by merging two adjacent blocks of the associated cyclic partition (the new distinguished block is the unique one that contains the original distinguished block and its label is obtained from the old by subtracting from it, modulo $n+1$, the cardinality of the unlabeled block that has been merged, if it was placed before the distinguished block in the cyclic order). Let us call $SN$ the resulting poset of Spin Necklaces.

In Example \ref{es:1} we see that $\FF(A_{{n}})$ is obtained by quotienting Aguiar and Petersen's poset $SN$ by the induced action of the weight lattice. This gives an acyclic category whose objects are plain (unlabeled) cyclic partitions of $n$. In order to determine orbits of morphisms, consider a covering relation $x\lessdot y$ in $SN$ and let $\pi(x)$, $\pi(y)$ be the cyclic partitions underlying the two spin necklaces. 

If both $\pi(x)$ and $\pi(y)$ have more than one block, then any labeling of $\pi(x)$ will induce a unique labeling of $\pi(y)$, and therefore there is only one orbit of morphisms $\pi(x)\to \pi(y)$ in $\FF(A_{{n}})$ which we can think of as ``removing the set of separators'' between adjacent blocks of $\pi(y)$ that are merged in $\pi(x)$.
Now suppose that $x$ has only one block, labelled with the number $l$.
Then $\pi(y)$ has two blocks, say $B_1$ and $B_2$. If $B_1$ is the distinguished block in $y$, then its label is determined as $l+\vert B_{2}\vert$, the sum taken modulo $n+1$.
Analogously, there is exactly one $y'\in SN$ with $\pi(y')=\pi(y)$ but with labeled block $B_2$.
Now, the only weight that fixes $x$, including labeling, is the zero weight.
Thus, the covering relation $x\lessdot y$ is in a different orbit than the covering relation $x\lessdot y'$.
Therefore, in  $\FF(A_{{n}})$ there are two different morphisms between $\pi(x)$ and $\pi(y)$ (in our "separators" parlance: if $B_1$ is labeled, we think of removing the separator $B_2|B_1$ in $B_1|B_2|$, otherwise the separator $B_1|B_2$).
Analogously one sees that for every order relation $x<z$ in $SN$, where $x$ has only one block and $z$ has $k$ blocks, in the quotient of $SN$ there are exactly $k$ distinct morphisms $\pi(x)\to \pi(z)$ -- which, in our representation, can be identified with the choice of $k-1$ block-separators to be removed from $\pi(z)$. 
\end{proof}

\begin{remark}\label{rem:found_group} 
The pure elliptic Braid group $P_{n+1}(\EC)$ is the fundamental group of the configuration space $F_{n+1}:= \set{p \in \EC^{n+1} \mid p_i \neq p_j \forall \, i,j \in [0,n]}$.
Since $F_{n+1}$ is isomorphic to the product $\EC \times M(A_n)$ via the morphism $p \mapsto (p_0, (p_i-p_0)_{i = 1, \dots, n})$, our presentation of $\pi_1(M(A_n))$ gives a presentation of $P_{n+1}(\EC)$.
To the authors' best knowledge, this presentation of the pure Braid group is different from the ones previously appeared in the literature (cf.\ \cite{Bellingeri04} \cite[Corollary 8]{GG04}). 
\end{remark}

\subsection{Fundamental group}
Let $\FF^n$ denote the set of chambers in $\FF(A_{n})$, and notice that $\FF^n$ is in bijection with the symmetric group $S_{n}$.
This bijection sends  
the permutation $\sigma = \left( \begin{smallmatrix}
1 & 2 & \dots & n \\
k_1 & k_2 & \dots & k_n
\end{smallmatrix} \right)$ to the chamber $C_\sigma = 0|k_1|k_2 \dots |k_n$. 
Let $\WW$ be the set of walls in $\FF(A_{n})$, each wall $W \in \WW$ is associated to a cyclic partition of $n$ into $n$ blocks.
The cardinality of $\WW$ is $\frac{1}{2}(n+1)!$.

Let $\WW^\circ \subset \WW$ be the set of all walls whose support is different from $\set{H_{0,i}}$ for all $i \in [1,n]$ (see \Cref{es:1}).
The simple reflections $s_i=(i,i+1) \in S_n$, for $i \in [1,n]$, form a Coxeter system for $S_n$.
The weak (right) order on the Coxeter group $S_n$ is given by the transitive closure of the following covering relation:
\[ \tau \vartriangleleft \sigma \Leftrightarrow \exists\, i \textnormal{ s.t.\ } \sigma=\tau s_i \textnormal{ and } l(\sigma)= l(\tau)+1 \]
The Hasse diagram of the weak order has $\FF^n$ as set of vertices and any edge between $\tau$ and $\sigma$ is associated to the wall (in $\WW^\circ$) between the chamber $C_\tau$ and $C_\sigma$.
For each $\sigma \in S_n$, $\sigma \neq \operatorname{id}$, we can consider the \textit{lexicographically first reduced word} $s_{i_1}s_{i_2} \dots s_{i_k}=\sigma$ and call $\tau$ the product $s_{i_1}s_{i_2} \dots s_{i_{k-1}}$.
Notice that $\tau \vartriangleleft \sigma$ and that $s_{i_1}s_{i_2} \dots s_{i_{k-1}}$ is the lexicographically first reduced word for $\tau$ by \cite[Theorem 3.4.8]{BjornerBrenti}.
As noticed before the covering relation $\tau \vartriangleleft \sigma$ defines a unique wall $W_\sigma$.
Let 
\begin{equation}\label{def:T}
T:=\{W_\sigma \mid \sigma \in S_n \setminus \set{\operatorname{id}}\}\subset \WW^\circ.
\end{equation}

\begin{lemma}
The set $T$ is a spanning tree of the graph $G_{A_n}$.
\end{lemma}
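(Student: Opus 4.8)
\emph{Proof plan.}
A spanning tree of $G_{A_n}$ is, by definition, a subgraph on the full vertex set $\FF^n$ that is a tree, so since $T\subseteq\WW^\circ$ is naturally regarded as a subgraph of $G_{A_n}$ with vertex set all of $\FF^n$, it suffices to show that $(\FF^n,T)$ is a tree. The plan is to establish two facts — that $T$ has exactly $n!-1$ edges, and that $(\FF^n,T)$ is connected — and then invoke that a connected graph on $n!$ vertices with $n!-1$ edges is a tree. Recall that $\lvert\FF^n\rvert=\lvert S_n\rvert=n!$.

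First I would prove that the assignment $\sigma\mapsto W_\sigma$ is injective on $S_n\setminus\{\operatorname{id}\}$, so that $\lvert T\rvert=n!-1$. For $\sigma\neq\operatorname{id}$, let $\tau\vartriangleleft\sigma$ be the covering relation obtained by deleting the last letter of the lexicographically first reduced word of $\sigma$; then $l(\tau)=l(\sigma)-1$, so $\tau\neq\sigma$, $W_\sigma$ is not a loop of $G_{A_n}$, and (since a wall is incident to at most two chambers) the set of chambers incident to $W_\sigma$ is exactly $\{C_\tau,C_\sigma\}$. Hence if $W_\sigma=W_{\sigma'}$ then $\{C_\tau,C_\sigma\}=\{C_{\tau'},C_{\sigma'}\}$, and because $\sigma\mapsto C_\sigma$ is a bijection this gives $\{\tau,\sigma\}=\{\tau',\sigma'\}$; comparing lengths ($l(\tau)<l(\sigma)$ and $l(\tau')<l(\sigma')$) rules out $\sigma=\tau'$, forcing $\sigma=\sigma'$.

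Next I would show that $(\FF^n,T)$ is connected by induction on $l(\sigma)$, proving that each $C_\sigma$ lies in the connected component of $C_{\operatorname{id}}$. The base case $\sigma=\operatorname{id}$ is trivial; for $\sigma\neq\operatorname{id}$ the edge $W_\sigma\in T$ joins $C_\sigma$ to $C_\tau$ with $l(\tau)=l(\sigma)-1<l(\sigma)$, and by the inductive hypothesis $C_\tau$ — hence $C_\sigma$ — lies in the component of $C_{\operatorname{id}}$. Therefore $(\FF^n,T)$ is a connected graph on $n!$ vertices with $n!-1$ edges, hence a tree, and being supported on all of $\FF^n$ it is a spanning tree of $G_{A_n}$.

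I expect the only delicate point to be the injectivity of $\sigma\mapsto W_\sigma$: it relies on $W_\sigma$ being a non-loop edge whose two incident chambers are $C_\tau$ and $C_\sigma$ with $l(\tau)<l(\sigma)$, so that $\sigma$ is recovered from $W_\sigma$ as its higher-length endpoint. Everything else is bookkeeping; one could alternatively replace the connectedness argument by a direct acyclicity check — in any cycle of $(\FF^n,T)$ a vertex $v$ of maximal length would have both incident cycle-edges equal to $W_v$, a contradiction — and then use that a forest with $n!-1$ edges on $n!$ vertices is a tree.
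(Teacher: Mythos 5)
Your proof is correct and takes essentially the same approach as the paper: establish connectedness of $(\FF^n,T)$ by induction on the Coxeter length $l(\sigma)$, then conclude via the count $\lvert T\rvert=n!-1=\lvert\FF^n\rvert-1$. The only difference is that you explicitly justify $\lvert T\rvert=n!-1$ by proving injectivity of $\sigma\mapsto W_\sigma$ (using that $W_\sigma$ is a non-loop edge whose higher-length endpoint recovers $\sigma$), a detail the paper leaves implicit.
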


\begin{proof}
We will prove that the subgraph 
 induced by $T$ is connected.
Let $C_{\sigma} \in \FF^d(A_n)$ be an arbitrary vertex of $G_{A_n}$. We prove that it is connected to the vertex $C_{\operatorname{id}}=0|1|\dots |n|$ by induction on $l(\sigma)$. If $l(\sigma)=0$ then $\sigma=\operatorname{id}$ and there is nothing to show. 
Call $\tau$ the product $s_{i_1}s_{i_2} \dots s_{i_{k-1}}$, where $s_{i_1}s_{i_2} \dots s_{i_k}$ is the lexicographically first reduced word for $\sigma$.
By the inductive hypothesis we have that $C_\tau$ is connected to $C_{\id}$ since $l(\tau)=l(\sigma)-1$, moreover the wall $W_\sigma \in T$ connects $C_\tau$ with $C_\sigma$.\\
We complete the proof by noticing that $\lvert T \rvert = n!-1$ and the number of vertices of $G_{A_n}$ is $\lvert \FF^0(A_n) \rvert = n!$.
\end{proof}

Let $\PP_{2}\subseteq \FF^{d-2}$ be the set of $2$-codimensional faces of $K_{A_n}$ whose support is $\set{H_{i,j}, H_{j,k}}$ for some $i,j,k \in [0,n]$.
The other $2$-codimensional faces have support equal to $\set{H_{i,j}, H_{k,l}}$ for some pairwise different indexes $i,j,k,l \in [0,n]$ and let $\PP_{11}\subseteq \FF^{d-2}$ be the set of such faces.

We orient each edge in the graph $G_{A_n}$ as follows: for each wall $W= i_0 i_1|i_2| \dots |i_n|$ with $i_0<i_1$ the orientation is from the chamber $i_0|i_1|i_2| \dots |i_n|$ to the chamber $i_1|i_0|i_2| \dots |i_n|$.
We call the chamber $s(W)= i_0|i_1|i_2| \dots |i_n| \in \FF^n$ the \emph{source} and $t(W)= i_1|i_0|i_2| \dots |i_n| \in \FF^n$ the \emph{target} of the edge represented by $W$.
We also choose a reference chamber $C_{\operatorname{id}}=0|1|\dots |n|$.

\begin{theorem}
The fundamental group $\pi_1(M(A_{n,\EC}))$ of the complement of the elliptic braid arrangement is generated by $(C,W)$ and by $(W,C)$ for $C \in \FF^n$ and $W \in \WW$ with the following relations.
\begin{enumerate}
\item \label{rel:T1} $(W,C_{\operatorname{id}})=1$ for all $W\in T$,
\item \label{rel:T2} $(C,W)=1$ for all $W\in T$ and all $C\in \FF^n$,
\item \label{rel:pair_of_walls} $(t(V),W)(V,s(W))=(V,t(W))(s(V),W)$ for all $V,W \in \WW$ with $\supp V \neq \supp W$,
\item 
\label{rel:P2}
$(U,C)(V,C)(W,C)=(W',C)(V',C)(U',C)$ for all $C\in \FF^n$ and all $p=ijk|\dots | \in \PP_2$ ($i<j<k$), where
\begin{align*}
& U=k|ij|\dots| & & V=ik|j|\dots | & & W= i|jk|\dots | \\
& U'=ij|k|\dots| & & V'=j|ik|\dots | & & W'= jk|i|\dots |
\end{align*}
\item 
\label{rel:P2bis}
$(C,U)(C,V)(C,W)=(C,W')(C,V')(C,U')$ for all $C\in \FF^n$ and all $p \in \PP_2$ ($i<j<k$), with the notation of \eqref{rel:P2} above.
\item 
\label{rel:P11}
$(W,C)(V,C)=(V',C)(W',C)$ for all $C\in \FF^n$ and all $p= ij|\dots |kh|\dots | \in \PP_{1,1}$ ($i<j$ and $k<h$), where $V=i|j|\dots |kh|\dots |$, $V'=j|i|\dots |kh|\dots |$, $W= ij|\dots |h|k|\dots |$, and $W'= ij|\dots |k|h|\dots |$,
\item 
\label{rel:P11bis}
$(C,W)(C,V)=(C, V')(C,W')$ for all $C\in \FF^n$ and all $p\in \PP_{11}$, with the notation of \eqref{rel:P11} above.
\end{enumerate}
\end{theorem}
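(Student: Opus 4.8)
The plan is to derive this presentation from the general one in \Cref{thm:pres} by substituting the explicit combinatorics of the Coxeter case $A_n$ and then performing a single Tietze transformation to bring the generating set into the symmetric form $\{(C,W),(W,C)\}$.

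First I would fix the dictionary. Using the identification of $\FF(A_n)$ with the category of cyclic partitions of $n$, the set of chambers $\FF^n$ is in bijection with $S_n$ via $\sigma\mapsto C_\sigma=0|k_1|\dots|k_n|$, the set of walls $\FF^{n-1}$ is the set $\WW$ of cyclic partitions into $n$ blocks, and the codimension-$2$ faces $\FF^{n-2}$ split as $\PP_2\sqcup\PP_{11}$ according to whether their support has the form $\{H_{i,j},H_{j,k}\}$ (equivalently $\{H_{i,j},H_{j,k},H_{i,k}\}$) or $\{H_{i,j},H_{k,l}\}$ with $i,j,k,l$ pairwise distinct. Since every wall of the braid arrangement lies on exactly one hyperplane, for walls $V,W$ the condition $\supp V\cap\supp W=\emptyset$ occurring in the definition of $\KK$ is equivalent to $\supp V\neq\supp W$. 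As the spanning tree and base chamber required by \Cref{thm:pres} I would take $T$ as in \eqref{def:T}, which is a spanning tree of $G_{A_n}$, rooted at $C_0=C_{\operatorname{id}}$, together with the orientation of the edges of $G_{A_n}$ sending the wall $W=i_0i_1|i_2|\dots|i_n|$ (with $i_0<i_1$) from $s(W)=i_0|i_1|i_2|\dots$ to $t(W)=i_1|i_0|i_2|\dots$; this determines the tree $T_\KK$ and the edge orientations of $K(\AAE)$ as in \Cref{lemma:tree_T_K}.

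With these choices the generators of types (i), (ii), (iii) of \Cref{thm:pres} are precisely the $(W,C_0)$ with $W\notin T$, the $(W,C_i)$ with $i\neq0$ (all $W$), and the $(C_i,W)$ with $W\notin T$ (all $i$). I would then enlarge this generating set by adjoining $(W,C_0)$ for $W\in T$ and $(C,W)$ for $W\in T$ and all $C\in\FF^n$, each together with its defining relation; the latter relation is trivial, since the corresponding loop $\gamma_e$ runs entirely inside the tree $T_\KK$. This Tietze move produces the generating set $\{(C,W),(W,C)\mid C\in\FF^n,\ W\in\WW\}$ of the statement, together with precisely the two extra relations \eqref{rel:T1} and \eqref{rel:T2}.

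It then remains to rewrite the relations of \Cref{thm:pres}. The $2$-cells of $K(\AAE)$ are the pairs $(a,b)$ with $a,b$ walls and $\supp a\neq\supp b$, and the pairs $(p,C)$ and $(C,p)$ with $p$ a codimension-$2$ face and $C$ a chamber. For the cells $(a,b)$ I would put $V=a$ and $W=b$ into \eqref{rel1} and read the boundary square with the chosen orientations, obtaining relation \eqref{rel:pair_of_walls} for all $V,W$ with $\supp V\neq\supp W$. For $p\in\PP_2$, \Cref{lem:sphere_ellittico} shows that $(p,C)$ is a hexagonal cell, so it contributes the six-term relation \eqref{rel2}; identifying the counter-clockwise sequence of the six walls through $p$ with $U,V,W,U',V',W'$ (opposite walls of the hexagon sharing the same support) and checking that the exponents come out as $(1,1,1,-1,-1,-1)$ turns this into \eqref{rel:P2}, while the cell $(C,p)$ yields \eqref{rel:P2bis}. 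For $p\in\PP_{11}$ the cell $(p,C)$ is a square, and the same procedure gives the four-term relations \eqref{rel:P11} (from $(p,C)$) and \eqref{rel:P11bis} (from $(C,p)$). Since every codimension-$2$ face of $K_{A_n}$ lies in $\PP_2\sqcup\PP_{11}$ and every pair of walls with distinct support is accounted for, the relations \eqref{rel:T1}--\eqref{rel:P11bis} are exactly the relation set of \Cref{thm:pres} in this case, which completes the argument.

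The step I expect to be the main obstacle is the explicit local analysis around a codimension-$2$ face: translating, in the cyclic-partition language and with the orientation convention fixed above, the cyclic order of the six (resp.\ four) walls and chambers incident to $p$, together with the induced boundary orientation of the cell $(p,C)$, into exactly the normalized relations \eqref{rel:P2}, \eqref{rel:P2bis}, \eqref{rel:P11}, \eqref{rel:P11bis} --- and not merely a cyclically rotated or inverted variant of them --- and, likewise, matching the orientations of the $(V,-)$- and $(-,W)$-type edges appearing in \eqref{rel:pair_of_walls} with those inherited from $G_{A_n}$.
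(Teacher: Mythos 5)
Your proposal is correct and essentially the same as the paper's proof: both specialize \Cref{thm:pres} to the spanning tree $T$ of \eqref{def:T} rooted at $C_{\operatorname{id}}$, both classify the $2$-cells of $K(\AAE)$ into pairs of walls $(a,b)$ and pairs $(p,C)$, $(C,p)$ with $p\in\PP_2$ or $p\in\PP_{11}$, and both read off relations \eqref{rel:pair_of_walls}--\eqref{rel:P11bis} without carrying out the explicit local boundary computation. Your Tietze move making explicit the passage from the reduced generating set of types (i)--(iii) to the symmetric set $\{(C,W),(W,C)\}$ with tree relations \eqref{rel:T1}, \eqref{rel:T2} is a useful clarification of a step the paper leaves implicit.
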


\begin{figure}
\begin{subfigure}[t]{0.49\textwidth}
\centering
\begin{tikzpicture}
    \node (p) at (0,3) {$\scriptstyle{i|j|k| \dots |}$};
    \node (e1) at (-1,1)  {$\scriptstyle{k|i|j| \dots |}$};
    \node (e3) at (1,1) {$\scriptstyle{j|k|i| \dots |}$};
    \node (g1) at (-1,2)  {$\scriptstyle{i|k|j| \dots |}$};
    \node (g3) at (1,2) {$\scriptstyle{j|i|k| \dots |}$};
    \node (f1) at (0,0) {$\scriptstyle{k|j|i| \dots |}$};
    \draw [->] (p) to node[left]{$W$} (g1);
    \draw [->] (g1) to node[left]{$V$} (e1);
    \draw [->] (g3) to node[right]{$V'$} (e3);
    \draw [->] (p) to  node[right]{$U'$} (g3);
    \draw [->] (e1) to node[left]{$U$} (f1);
    \draw [->] (e3) to node[right]{$W'$} (f1);
\end{tikzpicture}
\caption{The relation induced by $ijk|\dots | \in \PP_2$.} \label{fig:P2}
\end{subfigure}
\begin{subfigure}[t]{0.49\textwidth}
\centering
\begin{tikzpicture}
    \node (p) at (0,2) {$\scriptstyle{i|j|\dots |h|k|\dots|}$};
    \node (e1) at (-1,1)  {$\scriptstyle{j|i|\dots |h|k|\dots|}$};
    \node (e3) at (1,1) {$\scriptstyle{i|j|\dots |k|h|\dots|}$};
    \node (f1) at (0,0) {$\scriptstyle{j|i|\dots |k|h|\dots|}$};

    \draw [->] (p) to node[left]{$V$} (e1);
    \draw [->] (p) to node[right]{$W'$} (e3);
    \draw [->] (e1) to node[left]{$W$} (f1);
    \draw [->] (e3) to node[right]{$V'$} (f1);
\end{tikzpicture}
\caption{The relation induced by $ij|\dots |kh|\dots | \in\PP_{11}$.}
\label{fig:P11}
\end{subfigure}
\caption{The relations induced by the $2$-cells of the form $(C,p)$ and $(p,C)$.}
\end{figure}
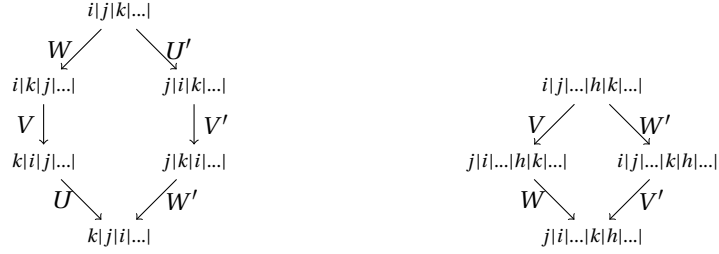

\begin{proof}
The result of \Cref{sect:fundamental_group} show that the fundamental group is generated by $(C,W)$ and by $(W,C)$ for $C \in \FF^n$ and $W \in \WW$, and that the relations are given by contracting a spanning tree $T_{\KK}$ and by all elements in $\KK^2$ (cf.\ \Cref{thm:pres}).
We choose as a spanning tree of $G_{A_n}$ the set $T$ from \Cref{def:T} and, by \Cref{lemma:tree_T_K}, the spanning tree $T_\KK$ of $\KK(A_n)$ is given by the left-hand side elements in \Cref{rel:T1,rel:T2} in the claim.
Recall from \Cref{def:KK} that the elements in $\KK^2(A_n)$ are of two kinds: pairs of walls and pairs consisting of a chamber and a $2$-codimensional face.
In the first case we obtain \Cref{rel:pair_of_walls}, the second case depends on the type of the $2$-codimensional face $p$:
if $p \in \PP_2$ then we obtain \Cref{rel:P2,rel:P2bis}, otherwise $p \in \PP_{11}$ and we obtain \Cref{rel:P11,rel:P11bis}.
\end{proof}

\appendix
\section{Polyhedral CW complexes and face categories}
\label{appendix}

\subsection{Acyclic categories and their nerves} \label{app:cat}
We will assume familiarity with the basic terminology of category theory and refer the reader to \cite{maclane} for clarification.
Given a 
 category $\CCC$ we will denote by $\Ob\CCC$, resp.\ $\Mor\CCC$ the sets of objects and morphisms of $\CCC$.
For every morphism $\phi\in \Mor\CCC$ we write $s(\phi)$ and $t(\phi)$ for the source, resp.\ the target of $\phi$.
Given objects $x,y\in \Ob\CCC$ we write $\Mor_{\CCC}(x,y)$ for the set of all morphisms $\phi$ with $s(\phi)=x$ and  $t(\phi)=y$.
Given $\phi_1, \phi_2\in \Mor\CCC$ with $t(\phi_1)=s(\phi_2)$ we write $\phi_1.\phi_2:= \phi_2 \circ \phi_1$ for the unique element of $\Mor(s(\phi_1),t(\phi_2))$ obtained as the concatenation (or composition) of $\phi_1$ and $\phi_2$.

An {\em acyclic category} (also called, e.g.\ in \cite{Bridson-Haefliger}, ``small category wihout loops'', or {\em scwol}) is any small category $\mathscr C$ such that the only invertible morphisms are endomorphisms, and the only endomorphisms are the identity morphisms.

\begin{remark}
Every partially ordered set $(P,\leq)$ can be thought of as an acyclic category $\CCC(P)$ with object set $P$ and such that, for every $x,y\in P$, $\Mor_{\mathscr C(P)}(x,y)$ is empty unless $x\leq y$, in which case it consists of exactly one element. 
\end{remark}

The {\em geometric realization} $\gr{\mathscr C}$ of an acyclic category $\mathscr C$ is defined as follows. Let $\Gamma_0=\Ob\CCC$ and, for all positive integers $d>0$, let $\Gamma_d$ denote the set of all ordered sets $\gamma=(\phi_1,\ldots,\phi_d)$ of non-identity composable morphisms of $\mathscr C$ (i.e., $t(\phi_{i})=s(\phi_{i+1})$ for all $i=1,\ldots d-1$). 
Moreover, for all $d>0$, every $\gamma\in \Gamma_d$ and $i=0,\ldots,d$ we define $\gamma_i\in \Gamma_{d-1}$ as follows:
\begin{align*}
\textrm{if } d=1, \,\,\,\, &\gamma_0:= s(\phi_1),\, \gamma_1:=t(\phi_1) \\
\textrm{if } d>1, \,\,\,\,
&\gamma_i:=\left\{
\begin{array}{ll}
(\phi_2,\ldots,\phi_d) & \textrm{if }i=0\\
(\phi_1,\ldots , \phi_{i-1},\phi_i.\phi_{i+1},\phi_{i+2},\ldots,\phi_d) & \textrm{if } 0<i<d \\
(\phi_1,\ldots,\phi_{d-1}) & \textrm{if }i=d
\end{array}\right.
\end{align*}

Now given $\gamma\in \Gamma_d$ define $\mathbb R^\gamma := \mathbb R^{s(\phi_1)}\oplus \mathbb R^{t(\phi_1)}  \oplus \mathbb R^{t(\phi_2)} \oplus \cdots \oplus \mathbb R^{t(\phi_d)}$, a $d+1$-dimensional Euclidean vector space with coordinates indexed by sources and targets of morphisms in $\gamma$.
Then, let $\Delta[\gamma]$ be the standard $d$-simplex in $ \mathbb R^\gamma$, i.e., the set of all $v\in \mathbb R^\gamma_{\geq 0}$ with sum of all coordinates equal to $1$, equipped with the subspace topology.
Moreover, for all $i=0,\ldots,d$ we write $\partial_\gamma^i:\Delta[\gamma_i]\to \Delta[\gamma]$ for the restriction of the natural inclusion $\mathbb R^{\gamma_i}\subseteq \mathbb R^{\gamma}$.

This defines a diagram of topological spaces $\Delta$ indexed over the set $\Gamma:=\bigcup_i\Gamma_i$ with partial order given by inclusion. Then we define
\[
\gr{\mathscr C}:=
\colim_{\Gamma}
\Delta = 
\biguplus_{\gamma\in \Gamma} \Delta[\gamma] 
\left/
\left(\begin{array}{c}
\partial^i_\gamma (x)\sim x\\
\textrm{for all }\gamma\in \Gamma_i,\, x\in\Delta[\gamma_i]  
\end{array}
\right)\right.
\]

Notice that, as a set, $\gr{\mathscr C}$ consists of the disjoint union $\biguplus_{\gamma\in \Gamma}\interior{\Delta[\gamma]}$ of the relative interiors of all simplices $\Delta[\gamma]$. In particular, the canonical 
continuous maps
\[
\alpha_\gamma: \Delta[\gamma] \rightarrow \gr{\mathscr C}
\]
are injective on the interior of $\Delta[\gamma]$.

\begin{figure}[ht]
\begin{center}
\begin{tikzpicture}
\node (L) at (-4.5,0) {\includegraphics[scale=.7]{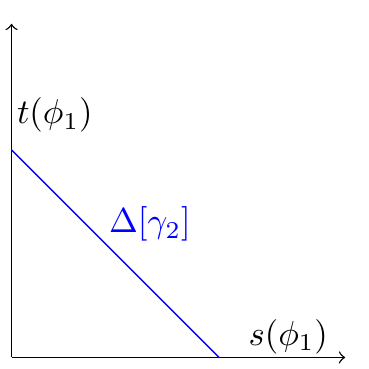}};
\node (C) at (0,0) {\includegraphics[scale=.7]{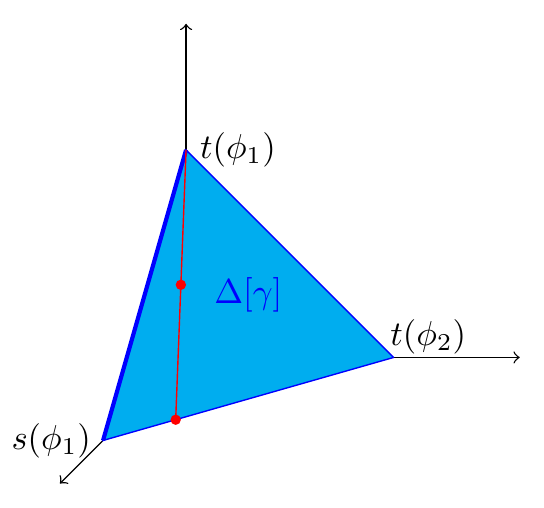}};
\node (R) at (5,0) {\includegraphics[scale=.7]{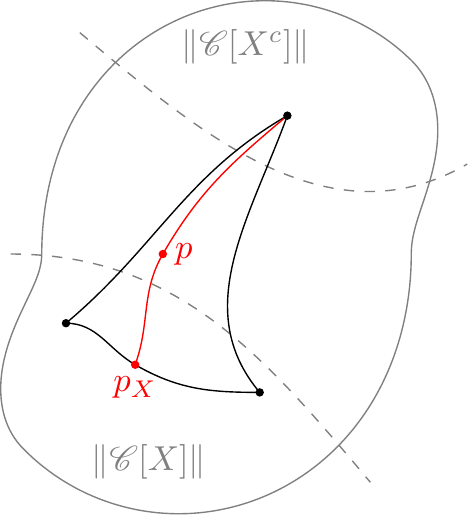}};
\draw[orange,->] (-4,0) to[out=30,in=170] (-1,.3);
\node[orange] (d) at (-2.9,.7) {$\partial_\gamma^2$};
\draw[orange,->] (.5,0.2) to[out=30,in=160] (3.3,.3);
\node[orange] (d) at (1.9,1) {$\alpha_\gamma$};
\end{tikzpicture}
\end{center}
\caption{Geometric illustration of the maps $\partial_\gamma^2$ and $\alpha_\gamma$ for $\gamma=(\phi_1,\phi_2)$, and of the retraction described in the proof of \Cref{lem:retract}}
\end{figure}

For any given small category $\mathscr C$ and any subset $X\subseteq \Ob(\mathscr C)$, let us denote by $\mathscr C [X]$ the {\em full subcategory} determined by $X$. In particular, this implies that $\Ob(\mathscr C[X])=X$ and $\Mor_{\mathscr C[X]}(x,y)=\Mor_{\mathscr C}(x,y)$ for all $x,y\in X$, see \cite[§ I.3, p.\ 15]{maclane}.

\begin{lemma}\label{lem:retract}
Let $\CCC$ be an acyclic category, choose $X\subseteq \Ob(\CCC)$ and consider the full subcategories $\CCC[X]$ and $\CCC[X^c]$ on the set $X$, resp.\ on its complement $X^c:=\Ob\CCC \setminus X$.
Then the space $\gr{\CCC} \setminus \gr{\CCC[X^c]}$ retracts onto the subcomplex $\gr{\CCC[X]}\subseteq \gr{\CCC}$.
\end{lemma}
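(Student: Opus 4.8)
The statement is a purely combinatorial-topological fact about geometric realizations of acyclic categories, so I would prove it by exhibiting an explicit deformation retraction at the level of the simplicial sets (or rather, the regular-cell structure) underlying the nerves. The key observation is that a simplex $\Delta[\gamma]$ of $\gr{\CCC}$, for $\gamma=(\phi_1,\dots,\phi_d)$, is entirely contained in $\gr{\CCC[X^c]}$ precisely when every object appearing as a source or target in $\gamma$ lies in $X^c$; dually, $\interior{\Delta[\gamma]}$ meets the complement $\gr{\CCC}\setminus\gr{\CCC[X^c]}$ exactly when at least one such object lies in $X$. So the plan is: first, stratify the simplices of the nerve into three classes --- those lying entirely in $\gr{\CCC[X]}$ (all vertices in $X$), those lying entirely in $\gr{\CCC[X^c]}$ (all vertices in $X^c$), and the ``mixed'' ones --- and observe that the space $\gr{\CCC}\setminus\gr{\CCC[X^c]}$ is the union of the open simplices of the first and third types. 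Second, for each mixed simplex I would identify the maximal face that survives (the sub-simplex spanned by the vertices in $X$, which is nonempty by the mixedness assumption, and is a face because $\CCC[X]$ is a full subcategory so the composites among the $X$-objects stay inside the nerve of $\CCC[X]$) and push the mixed open simplex linearly toward that face, collapsing the coordinates indexed by $X^c$-objects to zero.

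**Executing the retraction.** Concretely, I would define $r_t\colon \gr{\CCC}\setminus\gr{\CCC[X^c]}\to\gr{\CCC}\setminus\gr{\CCC[X^c]}$ for $t\in[0,1]$ as follows: a point in the open simplex $\interior{\Delta[\gamma]}$ has barycentric coordinates $(v_j)$ indexed by the $d+1$ sources/targets of $\gamma$; split the index set into the part $S$ lying in $X$ (nonempty) and the part $S^c$ lying in $X^c$, and set $r_t$ to scale the $S^c$-coordinates by $(1-t)$ and renormalize, so that at $t=1$ all of the $S^c$-coordinates vanish and the point lands in the face spanned by $S$. The crucial checks are: (a) this is well-defined independent of which $\gamma$ presents the point --- i.e.\ it commutes with the face maps $\partial^i_\gamma$, which holds because the face maps either delete an $X^c$-vertex (compatible with the scaling), delete an $X$-vertex (only happens on faces already in $\gr{\CCC[X]}$, where the retraction is the identity), or compose two morphisms (which can merge two $X^c$-objects into nothing visible, or keep $X$-objects, again compatibly); (b) the target face $\Delta[\gamma|_S]$ genuinely is a simplex of $\gr{\CCC}$ --- this is exactly where fullness of $\CCC[X]$ is used, since one must know the restricted chain $\gamma|_S$ (with intermediate morphisms replaced by their composites) consists of morphisms of $\CCC$, equivalently of $\CCC[X]$; (c) the homotopy stays inside $\gr{\CCC}\setminus\gr{\CCC[X^c]}$ for all $t$, which is clear since the $X$-coordinates never vanish along the path, so the point never falls into a fully-$X^c$ simplex.

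**The main obstacle.** The genuine technical work is verifying compatibility with the face identifications, i.e.\ that the piecewise-linear formula on each $\interior{\Delta[\gamma]}$ glues to a continuous map on the colimit $\gr{\CCC}=\colim_\Gamma\Delta$. This requires a careful case analysis of the three types of face operators $\gamma_i$ from the definition of the nerve (dropping the first object, composing two consecutive morphisms, dropping the last object) against the two-way split of the vertex set into $X$ and $X^c$; in the ``composition'' case one must be slightly careful when the composite of two non-identity morphisms between $X^c$-objects is still recorded, versus when an $X$-object sits between them. Continuity then follows from the colimit topology once the maps agree on overlaps, and the retraction property ($r_0=\id$, $r_1$ lands in $\gr{\CCC[X]}$, $r_1|_{\gr{\CCC[X]}}=\id$) is immediate from the formula. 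I would also remark that this is the acyclic-category analogue of the classical fact that deleting a full subposet from the order complex deformation-retracts the complement onto the order complex of the complementary subposet, which is presumably the conceptual source of the lemma.
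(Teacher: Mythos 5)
Your proof is correct and takes essentially the same approach as the paper: the paper also partitions the vertex set of each open simplex into those lying over $X$ and those lying over $X^c$, uses the join decomposition $\Delta[\gamma]=\Delta_A\ast\Delta_B$, and retracts along the join lines toward $\Delta_A$, which is exactly your rescale-and-renormalize formula. The only cosmetic difference is that the paper writes down the time-$1$ map $f$ and cites Munkres (Lemma 70.1) for the explicit homotopy, whereas you spell out the linear homotopy $r_t$ directly.
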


\begin{proof}
The subcomplex $\gr{\CCC[X]}$ of $\gr{\CCC}$ consists exactly of the cells  whose vertices all lie in $X$. 
Now recall that for every point $p\in \gr{\CCC}$
there is a unique chain of morphisms $\gamma$ such that $p$ is in the open cell $\alpha_{\gamma}(\interior{\Delta[\gamma]})$. Now the vertices of $\Delta[\gamma]$ can be partitioned in two sets  $A$ and $B$, according to whether the vertex's image under $\alpha_\gamma$ lies in $\gr{\CCC[X]}$ (then the vertex is in $A$) or not, and we can set $\Delta_A:=\operatorname{conv} A$, $\Delta_B:=\operatorname{conv} B$.
Then, $\Delta[\gamma]=\Delta_A\ast \Delta_B$. \\
 In particular, if $p\not\in \gr{\mathscr C[X^c]} \cup \gr{\mathscr C[X]}$ then $A,B\neq \emptyset$  
 and every point in the interior of $\Delta[\gamma]$ lies in the interior of a line segment between a unique $p'_A\in \Delta_A$   and a unique $p'_B\in \Delta_B$. In this case, let $p_X:=\alpha_\gamma(p'_A)$ be the point corresponding to $p'_A\in \Delta_A$.
If $p\not\in \gr{\mathscr C[X]}$, let $p_X:=p$. Then the map
\[
f\colon \gr{\CCC} \setminus \gr{\CCC[X^c]}\to \gr{\CCC[X]}, \,\,
p\mapsto p_X
\]
is a deformation retraction (the homotopy to the identity on $\gr{\CCC[X]}$ can be constructed, e.g., as in the proof of \cite[Lemma 70.1]{Munkres}).
\end{proof}

\begin{definition}\label{def:slice}
Given a category $\CCC$ and an object $x\in \Ob(\CCC)$, write $\CCC / x$ for the slice category, i.e., the category whose objects are all morphisms of $\CCC$ ending in $x$ and where, for any two given   $\phi \colon a\to x$ and $\psi : b\to x$ in $\Mor_{\CCC}$, there is one morphism $\mu_\zeta: \phi\to\psi$ for every $\zeta\in \Mor_{\CCC}(a,b).$
Composition of morphisms is given by $\mu_{\zeta}.\mu_{\zeta'} = \mu_{\zeta.\zeta'}$ whenever $\zeta.\zeta'$ are composable in $ \Mor_{\CCC}$. 
We will denote by $\overline{\CCC/x}$ the full subcategory of $\CCC/ x$ on the object set $\Ob(\CCC)\setminus \{\id_x\}$. 
\end{definition}

 Notice that, topologically, $\gr{{\CCC/x}}$  is a cone over $\gr{\overline{\CCC/x}}$. There is a natural functor
\[
J_x \colon {\CCC/x}\to \CCC
\]
that sends every $\phi \in \Ob{\CCC/x}$ to $J_x(\phi)=s(\phi)$, the source object of $\phi$ and every morphism $\mu_{\zeta}\in \Mor_{\CCC/x}$ to the morphism $\zeta\in\Mor_{\CCC}$ that corresponds to it. This functor induces a continuous function 
\begin{equation}\label{jx}
j_x \colon \gr{{\CCC/x}}\to \gr{\CCC}
\end{equation}

\begin{lemma}
The map $j_x$ is injective in the interior of each simplex of $\gr{{\CCC/x}}$ that has $\id_x$ as a vertex.
\end{lemma}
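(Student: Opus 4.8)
The plan is to unwind the definitions of the geometric realization of an acyclic category and to track how the functor $J_x$ acts on chains of morphisms. Recall that a point $p$ in the interior of a simplex $\alpha_\gamma(\interior{\Delta[\gamma]})$ of $\gr{\CCC/x}$ corresponds to a unique chain $\gamma = (\mu_{\zeta_1},\ldots,\mu_{\zeta_d})$ of non-identity composable morphisms in $\CCC/x$, together with a point in the open standard simplex; the vertices of this simplex are the objects $s(\mu_{\zeta_1}), t(\mu_{\zeta_1})=s(\mu_{\zeta_2}),\ldots,t(\mu_{\zeta_d})$ of $\CCC/x$, i.e. a sequence of morphisms $a_0\to a_1\to\cdots\to a_d=?$ all ending in $x$. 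The hypothesis that $\id_x$ is a vertex of this simplex forces $\id_x$ to be the \emph{last} vertex (since in any chain ending at an object which is a morphism into $x$, the only way $\id_x$ can appear is as the terminal object: morphisms of $\CCC/x$ go from $\phi\colon a\to x$ to $\psi\colon b\to x$ only when there is $\zeta\in\Mor_\CCC(a,b)$, and $\id_x$ receives a morphism from every object but maps only to itself, because $\Mor_\CCC(x,b)$ composed appropriately — more precisely, if $\id_x\to \psi$ were a non-identity morphism then $\zeta\in\Mor_\CCC(x,b)$ with $\psi\circ\zeta=\id_x$, and since $\CCC$ is acyclic this forces $b=x$ and $\zeta=\id_x$, a contradiction). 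So $a_d = \id_x$ and the chain has the form $(\phi_0\colon a_0\to x) \xrightarrow{\mu_{\zeta_1}} \cdots \xrightarrow{\mu_{\zeta_d}} (\id_x\colon x\to x)$.

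Now I would describe the image under $j_x$. The functor $J_x$ sends the object $\mu_{\zeta_i}$-chain to the $\CCC$-chain $(\zeta_1,\ldots,\zeta_d)$ of morphisms in $\CCC$, where $\zeta_i\in\Mor_\CCC(a_{i-1},a_i)$ and $a_d=x$. These $\zeta_i$ are non-identity because the $\mu_{\zeta_i}$ were non-identity. Crucially, the requirement that $\id_x$ is a vertex means the target object $a_d$ of the last morphism is exactly $x$, and the vertex data of the simplex $\Delta[\gamma]$ in $\gr{\CCC/x}$ maps bijectively onto the vertex data $s(\zeta_1),t(\zeta_1),\ldots,t(\zeta_d)=x$ of the simplex $\Delta[(\zeta_1,\ldots,\zeta_d)]$ in $\gr{\CCC}$. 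Thus $j_x$ restricted to such a simplex is, up to the canonical identification of standard simplices via the vertex bijection, just the realization map $\alpha_{(\zeta_1,\ldots,\zeta_d)}$, which by the last sentence of the construction of $\gr{\CCC}$ is injective on the interior. To conclude injectivity of $j_x$ \emph{across} distinct such simplices, I would argue that the chain $(\zeta_1,\ldots,\zeta_d)$ together with the point in its open simplex determines the point $j_x(p)$ in $\gr{\CCC}$ uniquely, and conversely one can recover the $\CCC/x$-chain: given the $\CCC$-chain $(\zeta_1,\ldots,\zeta_d)$ ending at $x$, there is a \emph{unique} lift to a chain in $\CCC/x$ ending at $\id_x$, namely $\phi_i := \zeta_{i+1}.\zeta_{i+2}.\cdots.\zeta_d \colon a_i\to x$ with $\mu_{\zeta_i}\colon \phi_{i-1}\to\phi_i$ (here I use that a morphism $\mu_{\zeta_i}$ in $\CCC/x$ is determined by its underlying $\zeta_i$ together with its source and target objects, and the source/target objects are pinned down once we know the chain ends at $\id_x$). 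Hence $p$ is recovered from $j_x(p)$.

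The main obstacle, and the step deserving the most care, is precisely the last point: checking that two different simplices of $\gr{\CCC/x}$ each having $\id_x$ as a vertex cannot have interior points with the same image in $\gr{\CCC}$. This reduces to the injectivity-on-vertices claim together with the observation that the interiors of distinct simplices of $\gr{\CCC}$ are disjoint (true by construction) and that the $\CCC/x$-chain ending at $\id_x$ is uniquely reconstructible from its image $\CCC$-chain, which in turn hinges on $\CCC$ being acyclic — this is what prevents two distinct morphisms into $x$ from having the same underlying behaviour, and ensures the lift $\phi_i$ above is well-defined and unique. Everything else is bookkeeping: unwinding the colimit description of $\gr{-}$, identifying faces via the $\gamma_i$ operators, and matching the combinatorics of the two simplices under $J_x$.
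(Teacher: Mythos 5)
Your proof is correct and follows essentially the same strategy as the paper's: identify that $\id_x$ must be the terminal vertex of any chain containing it, observe that $J_x$ sends the chain $(\mu_{\zeta_1},\ldots,\mu_{\zeta_d})$ to $(\zeta_1,\ldots,\zeta_d)$, and deduce injectivity from the injectivity of this chain-level assignment; the paper simply calls the latter ``evident'' where you supply the explicit reconstruction $\phi_i = \zeta_{i+1}.\cdots.\zeta_d$. One small caution: your argument that $\id_x$ is terminal in $\CCC/x$ invokes the compatibility condition $\psi\circ\zeta = \phi$ of the standard slice category, which is clearly what the paper intends (and uses in \Cref{lem:sphere}) even though \Cref{def:slice} as literally stated omits it.
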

\begin{proof}
The identity $\id_x$ is a vertex of the simplex $\Delta[\gamma]$ with $\gamma=(\mu_{\zeta_1}, \dots, \mu_{\zeta_d})$ if and only if $t(\mu_{\zeta_d})=\id_x$, i.e., $t(\zeta_d)=x$.
Now, $J_x$ maps every sequence $\gamma=(\mu_{\zeta_1},\ldots,\mu_{\zeta_d})$ of composable morphisms in ${\CCC/x}$ with $t(\mu_{\zeta_d})=\id_x$ to the sequence $(\zeta_1,\ldots,\zeta_d)$ of composable morphisms of $\CCC$, with $t(\zeta_d)=x$. The evident injectivity of this map implies injectivity of $j_x$ on the open cell $\alpha_\gamma(\interior{\Delta[\gamma]})\subseteq \gr{{\CCC/x}}$.
\end{proof}

\subsection{Polyhedral CW complexes}

Recall that a polytope $P$ is any convex hull of a finite set of points in Euclidean space. Equivalently, a polytope is any bounded subset of $\mathbb R^d$ obtained as the intersection of finitely many halfspaces \cite[Theorem 1.1]{Ziegler}. A face of $P$ is any non-empty subset of the form $F=\{x\in P \mid \ell(x)=c\}$, where $\ell$ is any linear form and $c\in \mathbb R$ is such that $\ell(x)\geq c$ for all $x\in P$. We call {\em polyhedral homeomorphism} any homeomorphism $f:P\to Q$ such that the image of every face of $P$ is a face of $Q$.
Recall that the existence of such a polyhedral homeomorphism is equivalent to the fact that the posets of faces of $P$ and $Q$ are isomorphic, see \cite[§ 2.2]{Ziegler}.

\begin{definition}
A {\em polyhedral CW complex} is a Hausdorff space $K$ together with a family of continuous maps $\{\alpha:P_\alpha\to K\}_{\alpha\in I}$ where each $P_\alpha$ is a convex polytope and, writing $F(\alpha):=\alpha(\interior{P_\alpha})$, the following conditions hold.
\begin{itemize}
\item[(PC0)] The restriction $\alpha\vert_{\interior{P_\alpha}}$ is a homeomorphism  for all $\alpha\in I$.
\item[(PC1)] $K=\biguplus_{\alpha\in I} F(\alpha)$. 
\item[(PC2)] Any $A\subseteq K$ is closed in $K$ if and only if $A\cap \alpha(P_\alpha)$ is closed for every $\alpha\in I$.
\item[(PC3)] For every $\alpha\in I$ and every face $F$ of $P_\alpha$, there is $\beta\in I$ and a polyhedral homeomorphism $\phi^F: P_\beta \to F$ such that $\beta = \alpha \circ \phi^F$.  
\end{itemize}
The {\em face category} of the polyhedral complex $K$ is the category $\mathscr F(K)$ with
\begin{itemize}
\item $\Ob(\mathscr F(K))=I$
\item The set of morphisms of $\mathscr F(K)$ is the set of all $\phi^F:P_\beta\to F$ where $F$ ranges over the faces of all $P_\alpha$, $\alpha\in I$ ($\beta$ is then uniquely determined, see (PC3)). Sources and targets are defined by $s(\phi^F)=\beta$ and $t(\phi^F)=\alpha$.
The composition of $\phi^F\in\Mor_{\mathscr F(K)} (\alpha,\beta)$ with $\phi^G\in\Mor_{\mathscr F(K)} (\beta,\gamma)$ is defined as 
$$\phi^F.\phi^G=
\phi^{\phi^G(F)}\in \Mor_{\mathscr F(K)} (\alpha,\gamma).$$
\end{itemize}
\end{definition}

\begin{example}
Let $K=S^1$, we define a polyhedral CW complex on $K$ as follows.
Let $\alpha \colon q \to S^1$ defined by $\alpha(q)= 1 \in S^1$, where $q$ is the $0$-dimensional polyhedron, \ie a point.
Let $\beta \colon [0,1] \to S^1$ be the map defined by $\beta(t) = e^{2\pi i t} \in S^1$, where $[0,1]=P_\beta$ is a $1$-dimensional polyhedron.
The set $\set{\alpha,\beta}$ defines a polyhedral CW complex homeomorphic to $S^1$, see \Cref{fig:poly_complex}.
The proper faces of $P_\beta$ are $\set{0}$ and $\set{1}$, the four maps $\phi^F$ are $\phi^{\set{1}} \colon q \to \set{1}$, $\phi^{\set{0}} \colon q \to \set{0}$, $\phi^{P_\alpha} = \id_{P_\alpha}$, and $\phi^{P_\beta} = \id_{P_\beta}$.
\begin{figure}
\centering
\begin{subfigure}[t]{0.49\textwidth}
\centering
\begin{tikzpicture}
\draw [fill] (0,0) circle [radius=0.05];
\node [below] at (0,0) {$F(\alpha)=\beta(0)=\beta(1)$};
\draw [thick] (0,0.5) circle [radius=0.5];
\node [above] at (0,1) {$F(\beta)$};
\end{tikzpicture}
\subcaption{The polyhedral CW complex $K\cong S^1$ consisting of one edge $P_\beta=[0,1]$ whose endpoints are identified in the single vertex $P_\alpha=q$.}
\end{subfigure}
\begin{subfigure}[t]{0.49\textwidth}
\centering
\begin{tikzpicture}
    \node (p) at (0,0) {$P_\alpha$};
    \node (e) at (0,1.5)  {$P_\beta$};
    \node at (-0.5,0.5){$\phi_\alpha^{0}$};
    \node at (0.5,0.5){$\phi_\alpha^{1}$};
    \node at (0.6,2.2){$\phi_\beta^{P_\beta}$};
    \node at (0.6,-0.7){$\phi_\alpha^{P_\alpha}$};
    \draw [->,bend left=20] (p) to (e);
    \draw [->,bend right=20] (p) to (e);
    \draw [->] (e.70) arc (-60:240:.7em);
    \draw [->] (p.-70) arc (60:-240:.7em);
\end{tikzpicture}
\subcaption{The category $\FF(K)$.}
\end{subfigure}
\caption{A polyhedral CW complex and its face category.}
\label{fig:poly_complex}
\end{figure}
\end{example}

\begin{remark}\label{rem:unique}$\,$
\begin{itemize}
\item[(1)] The maps $\alpha$ are open and proper maps (as is every continuous function from a compact space to a Hausdorff space).
\item[(2)]
Notice that if the identity $x.\phi^F = \phi^{G}$ in $\Mor_{\mathscr F(K)}$ has a solution then this solution is unique and given by $\phi^{(\phi^F)^{-1}(G)}$. 
\end{itemize}
\end{remark}

\begin{example}
Every polytope $P$ is itself a polyhedral complex, with respect to the family $\{F\hookrightarrow P\}_{F\textrm{ face of }P}$ of inclusions of all nonempty faces. In this case the face category $\mathscr F(P)$ is a poset, called the {\em poset of faces} of $P$. 
\end{example}
\begin{remark}
If $P$ and $Q$ are two polytopes, then the posets $\mathscr F(P)$ and $\mathscr F(Q)$ are isomorphic if and only if there exists a polyhedral homeomorphism $P\to Q$ \cite[§ 2.2, p. 58]{Ziegler}.
\end{remark}

\begin{example}
Our definition of a polyhedral CW complex encompasses the notion of a polyhedral complex in the sense of \cite[Section 2.2.4]{Kozlov} (where regularity is required) as well as the polyhedral complexes considered in \cite[Chapter 7]{Bridson-Haefliger} (where the ``face maps'' $\phi^F$ are required to be isometries).
\end{example}

\begin{example}
The nerve of every acyclic category is a polyhedral CW complex where all polytopes $P_\alpha$ are simplices.
\end{example}

\subsection{Polyhedral CW complexes and acyclic categories}
The fundamental relationship between a polyhedral complex and its face category is expressed by the following proposition.
\begin{proposition}
Let $K$ be a polyhedral CW complex. Then there is a homeomorphism $\gr{\mathscr F(K)}\cong K$.
\end{proposition}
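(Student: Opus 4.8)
The plan is to construct the homeomorphism cell by cell, exploiting the combinatorial dictionary between faces of $K$ and objects of $\mathscr F(K)$, and then to check that the pieces glue into a well-defined continuous bijection with continuous inverse. First I would recall that, for each object $\alpha \in I = \Ob(\mathscr F(K))$, the slice category $\mathscr F(K)/\alpha$ is isomorphic to the poset of faces of the polytope $P_\alpha$: indeed, by (PC3) every face $F$ of $P_\alpha$ determines a unique $\beta$ with $\phi^F \colon P_\beta \to F$ an object of $\mathscr F(K)/\alpha$, and composites of such face maps correspond to inclusions of faces via \Cref{rem:unique}(2), so this correspondence is a functor and an isomorphism of categories. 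Consequently $\gr{\mathscr F(K)/\alpha} = \gr{\mathscr F(P_\alpha)}$, which by the standard (poset) case — or directly, via barycentric subdivision — is canonically homeomorphic to $P_\alpha$. Composing with the canonical functor $J_\alpha \colon \mathscr F(K)/\alpha \to \mathscr F(K)$ of \Cref{def:slice} yields a continuous map $j_\alpha \colon P_\alpha \cong \gr{\mathscr F(K)/\alpha} \to \gr{\mathscr F(K)}$ for every $\alpha$.

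Next I would verify the compatibility condition (PC3) on the combinatorial side: for a face $F$ of $P_\alpha$ with associated $\phi^F \colon P_\beta \to F$, the diagram relating $j_\beta$, $j_\alpha$ and the inclusion $P_\beta \hookrightarrow P_\alpha$ (via $\phi^F$) commutes, because at the level of slice categories the map $\mathscr F(K)/\beta \to \mathscr F(K)/\alpha$ induced by post-composition with $\phi^F$ is exactly the inclusion of the face poset of $P_\beta$ as the face poset of $F$ inside that of $P_\alpha$, and $J_\beta$, $J_\alpha$ agree after this identification. This means the family $\{j_\alpha \colon P_\alpha \to \gr{\mathscr F(K)}\}_{\alpha \in I}$ satisfies the axioms making $\gr{\mathscr F(K)}$ a polyhedral CW complex in which each $j_\alpha$ plays the role of the characteristic map $\alpha$. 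Property (PC1) for $\gr{\mathscr F(K)}$ — the open cells $j_\alpha(\interior{P_\alpha})$ partition $\gr{\mathscr F(K)}$ — follows from the description of $\gr{\mathscr C}$ as the disjoint union $\biguplus_{\gamma} \interior{\Delta[\gamma]}$ of open simplices: each open simplex sits in a unique $\interior{P_\alpha}$ where $\alpha = t(\gamma)$, using that $j_\alpha$ is injective on the interior (this is where the injectivity lemma for $j_x$ on simplices containing $\id_x$ is used).

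Then I would assemble the global map. Since the $j_\alpha$ are compatible under face inclusions, they induce a single continuous map $\Phi \colon K \to \gr{\mathscr F(K)}$ by $\Phi|_{\alpha(P_\alpha)} := j_\alpha \circ (\alpha|_{P_\alpha})^{-1}$ — well-defined because on overlaps $\alpha(P_\alpha) \cap \alpha'(P_{\alpha'})$ the definitions agree on common faces by (PC3) for $K$ combined with the commuting diagram above, and continuous by the weak-topology axiom (PC2) for $K$. Conversely $\Phi$ carries the open cell $F(\alpha)$ bijectively onto $j_\alpha(\interior{P_\alpha})$, so by (PC1) on both sides $\Phi$ is a continuous bijection; it maps the closed cell $\alpha(P_\alpha)$ homeomorphically onto $j_\alpha(P_\alpha)$ (a compact set), hence is closed by (PC2) for $\gr{\mathscr F(K)}$, therefore a homeomorphism.

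The main obstacle I anticipate is bookkeeping the compatibility of the maps $j_\alpha$ across face inclusions rigorously — i.e., making the "commuting diagram" claim precise at the level of geometric realizations rather than just categories, and ensuring the glued map $\Phi$ is genuinely well-defined on all intersections $\alpha(P_\alpha) \cap \alpha'(P_{\alpha'})$, which by the axioms are unions of common closed faces. A careful treatment requires observing that any simplex $\Delta[\gamma]$ of $\gr{\mathscr F(K)}$ with $t(\gamma) = \alpha$ lies in the image of $j_\alpha$, and identifying which sub-simplices descend to faces of $P_\alpha$; the injectivity lemma for $j_x$ proved above is exactly the technical input that keeps this identification clean. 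Everything else — the poset case of the proposition, continuity via weak topology, and the closed-map argument — is routine.
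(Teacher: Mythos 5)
Your argument is correct, but it goes in the opposite direction from the paper and uses machinery the paper develops only later. The paper's proof constructs a map $f\colon \gr{\mathscr F(K)}\to K$ directly and at the point level: each $p\in\gr{\mathscr F(K)}$ lies in the interior of a unique simplex $\Delta[\gamma]$, the chain $\gamma$ has a well-defined top object $\alpha=t(\gamma)$, and $\Delta[\gamma]$ includes canonically as a cell of the barycentric subdivision of $P_\alpha$; composing with the characteristic map $\alpha$ gives $f$, which is continuous (because $\alpha$ is open and $\alpha_\gamma$ is a homeomorphism on interiors), bijective, and proper, hence a homeomorphism by a standard criterion. That proof is self-contained and never mentions slice categories. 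You instead assemble the inverse map $\Phi\colon K\to\gr{\mathscr F(K)}$ cell by cell via the maps $j_\alpha\circ b_\alpha\colon P_\alpha\to\gr{\mathscr F(K)}$, which requires first establishing $\mathscr F(K)/\alpha\cong\mathscr F(P_\alpha)$ (this is \Cref{lem:sphere}, which in the paper appears \emph{after} the proposition) and then checking the compatibility of the $j_\alpha$ across face inclusions — the very construction the paper uses later in \Cref{lem:CW}. This is a legitimate route and your identification of the key technical point (the injectivity lemma for $j_x$, needed to see that the $j_\alpha(\interior{P_\alpha})$ partition the realization) is correct, though you need the global version — injectivity of $j_\alpha$ on all of $\interior{\gr{\mathscr F(K)/\alpha}}$, not just simplex-by-simplex — which follows because distinct chains ending at $\id_\alpha$ map to distinct chains under $J_\alpha$ and hence to disjoint open simplices. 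Two small blemishes: the formula $\Phi\vert_{\alpha(P_\alpha)}:=j_\alpha\circ(\alpha\vert_{P_\alpha})^{-1}$ is nonsense as written since $\alpha$ is not invertible on $P_\alpha$; what you mean (and what makes the weak-topology argument work) is the equality $\Phi\circ\alpha=j_\alpha$, which is the content of the compatibility diagram. And your proof, if inserted where the proposition sits, would forward-reference \Cref{lem:sphere}; it avoids circularity only because you reprove that lemma inline. In exchange for the extra overhead, your approach makes the eventual \Cref{lem:CW} essentially free; the paper prefers to keep the proposition short and pay the cost later.
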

\begin{proof}
Define a function $f:\gr{\mathscr F(K)}\to K$ as follows. Given $p\in \gr{\mathscr F(K)}$ we consider the unique chain $\gamma$ of composable morphisms in $\mathscr F(K)$ such that the image  $F(\gamma)$ of $\alpha_\gamma: \interior{\Delta[\gamma]}\to \gr{\mathscr F(K)}$ is the unique open face of the complex $\gr{\mathscr F(K)}$ that contains $p$. We can let $\alpha$ be the target of the composition of the morphisms in $\gamma$. Then, there is a canonical inclusion $\iota : \Delta[\gamma]\hookrightarrow P_\alpha$ as an open cell of the barycentric subdivision. Now let
$$
f(x):=\alpha\circ\iota\circ(\alpha_\gamma)^{-1}(x).
$$
since $\alpha$ is an open map (Remark \ref{rem:unique}.(1)) and $\alpha_\gamma$ is a homeomorphism on the interior of $\Delta$, $f$ is continuous. Moreover, $f$ is clearly bijective and it is proper (preimages of compacta are compact). By \cite[Proposition 2.1]{Walker}, $f$ is a homeomorphism.
\end{proof}

\begin{lemma}\label{lem:sphere}
Let $K$ be a polyhedral CW complex. For every $\alpha\in \Ob\mathscr F(K)$, the slice category ${\mathscr F(K)/\alpha}$ is isomorphic to the poset of faces of the polytope $P_\alpha$. 
\end{lemma}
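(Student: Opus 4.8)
The plan is to unwind both sides of the claimed isomorphism and exhibit a bijection that respects the order/composition structure. Recall that an object of ${\mathscr F(K)/\alpha}$ is a morphism $\phi^F\colon P_\beta\to F$ in $\mathscr F(K)$ with target $\alpha$, which by definition of the face category is precisely the datum of a face $F$ of the polytope $P_\alpha$ (together with the polyhedral homeomorphism from the abstract polytope $P_\beta$ realizing it, which by (PC3) and \Cref{rem:unique}.(2) is uniquely determined by $F$). So on objects the assignment $\phi^F\mapsto F$ is a bijection onto the poset of nonempty faces of $P_\alpha$.

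For morphisms: given two objects $\phi^F\colon P_\beta\to F$ and $\phi^G\colon P_{\beta'}\to G$ of ${\mathscr F(K)/\alpha}$, a morphism between them is $\mu_\zeta$ for $\zeta\in\Mor_{\mathscr F(K)}(\beta,\beta')$ with $\zeta.\phi^G=\phi^F$. First I would observe that $\zeta$, being a morphism in $\mathscr F(K)$, is itself of the form $\phi^E$ for some face $E$ of $P_{\beta'}$, and the composition rule $\phi^E.\phi^G=\phi^{\phi^G(E)}$ forces $\phi^G(E)=F$, i.e. $F\subseteq G$ as faces of $P_\alpha$ (since $\phi^G$ maps $P_{\beta'}$ homeomorphically onto $G$ carrying faces to faces, and $E$ to the face $F$). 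Conversely, whenever $F\subseteq G$ are faces of $P_\alpha$, the face $F$ of $P_\alpha$ is also a face of $G$, hence pulls back under the polyhedral homeomorphism $\phi^G$ to a unique face $E=(\phi^G)^{-1}(F)$ of $P_{\beta'}$, which by (PC3) gives a unique $\zeta=\phi^E\in\Mor_{\mathscr F(K)}(\beta,\beta')$ with $\zeta.\phi^G=\phi^F$ (using the uniqueness in \Cref{rem:unique}.(2)). Thus $\Mor_{{\mathscr F(K)/\alpha}}(\phi^F,\phi^G)$ is a singleton exactly when $F\subseteq G$ and empty otherwise — which is exactly the hom-set structure of the poset of faces of $P_\alpha$.

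Finally I would check that this bijection is functorial, i.e. that composition of morphisms in ${\mathscr F(K)/\alpha}$ matches the (trivial) composition in the face poset. This is immediate: composability of $F\subseteq G\subseteq H$ in the poset corresponds under our bijection to $\mu_{\zeta}.\mu_{\zeta'}=\mu_{\zeta.\zeta'}$ in the slice category, and since each hom-set involved is a singleton there is nothing further to verify; identities go to identities because $\id_{P_\alpha}=\phi^{P_\alpha}$ corresponds to the top face $P_\alpha$. Hence $F\mapsto$ (its associated object $\phi^F$) is an isomorphism of categories between the poset of faces of $P_\alpha$ and ${\mathscr F(K)/\alpha}$.

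The only genuinely delicate point — and the one I would be most careful about — is the bookkeeping in the previous paragraph: showing that a morphism $\zeta$ in the slice category really does correspond to a face inclusion and not to some spurious extra morphism. This rests entirely on the fact that every morphism of $\mathscr F(K)$ is a $\phi^E$ for a unique face $E$ (built into the definition) together with the uniqueness of solutions to $x.\phi^F=\phi^G$ recorded in \Cref{rem:unique}.(2); once those two facts are invoked cleanly, the rest is routine.
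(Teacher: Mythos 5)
Your proof is correct and follows essentially the same route as the paper's: establish that $\phi^F\mapsto F$ is a bijection on objects, then analyze morphisms in the slice via the composition rule $\phi^E.\phi^G=\phi^{\phi^G(E)}$ and the uniqueness statement in \Cref{rem:unique}.(2). You are in fact a bit more careful than the published argument: the paper verifies that a morphism $\phi^F\to\phi^G$ forces $F\subseteq G$ and that $f$ is bijective, but leaves implicit the converse (that $F\subseteq G$ actually \emph{produces} a morphism in the slice). A bijection between posets that is merely order-preserving need not be an order isomorphism, so your explicit construction of $\zeta=\phi^{(\phi^G)^{-1}(F)}$ from the face inclusion $F\subseteq G$ is the right thing to include; it is the step the paper treats as routine. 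One small remark: you are (correctly) reading \Cref{def:slice} as the standard slice category, i.e.\ imposing the commuting-triangle condition $\zeta.\phi^G=\phi^F$; the paper's stated definition omits this condition as written, but both the paper's own proof and \Cref{rem:unique} make clear that the triangle is intended, so your interpretation is the right one.
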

\begin{proof} That $\mathscr F(K)/\alpha$ is a poset follows from Remark~\ref{rem:unique}, and it has finitely many objects by definition of $K$. Thus we only need to consider the map
$
f: \Ob({\mathscr F(K)/\alpha}) \to \mathscr F(P_\alpha)$, 
$
\phi^F \mapsto F
$
and prove the following claims
\begin{itemize}
\item $f$ is bijective. Surjectivity follows from (PC3) above. 
For injectivity notice that $f(\phi^F)=f(\phi^{G})$ implies $F=G$ and thus $\phi^F=\phi^{G}$ (recall (PC1) and (PC3)).
\item 
If there is a morphism $\phi^F\to \phi^{G}$ in $\mathscr F(K)/\alpha$, then $F\subseteq G$. In fact, if such a morphism exists then there is a morphism $\phi^H$ of $\FF(K)$ such that $\phi^H.\phi^{G}=\phi^F$. the definition of the composition of morphisms in $\FF(K)$ now gives $\phi^{\phi^{G}(H)}=\phi^F$ and in particular $\phi^{G}(H)=F$. This means that $F\subseteq \im(\phi^G)=G$ as desired. \qedhere
\end{itemize}
\end{proof}

\begin{lemma}\label{lem:CW}
Let $\CCC$ be an acyclic category such that for every $x\in \Ob(\CCC)$ the category ${\CCC/x}$ is the face category of a finite-dimensional polytope.
Then, there is a structure of a polyhedral CW complex $K$ on $\gr{\CCC}$ such that $\CCC$ is the face category of $K$. 
Moreover, for every $x\in \Ob\CCC$ the cell $F(x)$ of $K$ indexed by $x$ is the image of $\interior{\gr{\CCC/x}}$ under $j_x$.
\end{lemma}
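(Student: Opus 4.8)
The statement to prove is \Cref{lem:CW}: if $\CCC$ is acyclic and each slice $\CCC/x$ is the face category of a finite-dimensional polytope, then $\gr{\CCC}$ carries a polyhedral CW structure $K$ whose face category is $\CCC$, with cells $F(x)=j_x(\interior{\gr{\CCC/x}})$. The plan is to manufacture the characteristic maps directly from the slice categories and then verify (PC0)--(PC3). For each $x\in\Ob\CCC$, the hypothesis gives a polytope $P_x$ together with a poset isomorphism $\CCC/x\cong \FF(P_x)$; choosing a geometric realization of the barycentric subdivision of $P_x$ identifies $\gr{\CCC/x}$ with $P_x$ (via the standard homeomorphism between a polytope and the realization of its face poset, \cite[Prop.\ 2.1]{Walker} or the proposition preceding \Cref{lem:sphere}). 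I would then \emph{define} the attaching map $\alpha_x\colon P_x\cong \gr{\CCC/x}\xrightarrow{j_x}\gr{\CCC}$, where $j_x$ is the map from \eqref{jx}. The candidate complex is $K:=\bigl(\gr{\CCC},\{\alpha_x\}_{x\in\Ob\CCC}\bigr)$.

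\textbf{Key steps, in order.} First, identify the image of $j_x$: a chain of composable non-identity morphisms in $\CCC/x$ ending at $\id_x$ maps under $J_x$ to a chain in $\CCC$ ending at $x$, and conversely; so $j_x$ carries the open cones over simplices of $\gr{\overline{\CCC/x}}$ (the simplices having $\id_x$ as a vertex, plus the cone point) bijectively onto the union of open simplices $\alpha_\gamma(\interior{\Delta[\gamma]})$ of $\gr\CCC$ indexed by chains $\gamma$ whose target is $x$. By the lemma just before \Cref{lem:sphere}, $j_x$ is injective on the interior of each such simplex; since $\gr{\CCC/x}$ is precisely the cone on $\gr{\overline{\CCC/x}}$, one checks these interiors are matched up compatibly, giving that $\alpha_x$ restricted to $\interior{P_x}$ (which corresponds to $\interior{\gr{\CCC/x}}$) is a homeomorphism onto $F(x):=j_x(\interior{\gr{\CCC/x}})$; this is (PC0) and the last sentence of the statement. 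Second, (PC1): every open simplex of $\gr\CCC$ comes from a unique chain $\gamma$ with a well-defined target $x$, hence lies in exactly one $F(x)$, so $\gr\CCC=\biguplus_x F(x)$. Third, (PC2): $\gr\CCC$ has the colimit (weak) topology with respect to the simplices $\Delta[\gamma]$; since each $P_x$ is covered by finitely many such simplices and each simplex $\Delta[\gamma]$ lies in the image of some $\alpha_x$ (take $x$ the target of $\gamma$), the weak topology with respect to the $\alpha_x(P_x)$ coincides with the original one. Fourth, (PC3): a face $F$ of $P_x\cong\FF(P_x)\cong\CCC/x$ corresponds to an object $\phi^F\colon y\to x$ of $\CCC/x$, i.e.\ a morphism $\zeta\in\Mor_\CCC(y,x)$; the slice $(\CCC/x)/(\phi^F)$ is canonically isomorphic to $\CCC/y$ (post-composition with $\zeta$ gives the isomorphism of slices), which identifies the sub-polytope $F$ with $P_y$ via a polyhedral homeomorphism $\phi^F\colon P_y\to F$ satisfying $\alpha_y=\alpha_x\circ\phi^F$. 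Finally, reading off the face category: objects are $\Ob\CCC$ by construction, and the morphisms $\phi^F$ (faces of the $P_x$) are exactly the morphisms $\zeta\in\Mor_\CCC(y,x)$ by the identification in (PC3), with composition matching because nesting of faces corresponds to composition in $\CCC$; this uses acyclicity of $\CCC$ to guarantee that $\CCC/x$ really is a \emph{poset} (the analogue of \Cref{rem:unique}(2)), so that the face-category axioms are consistent.

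\textbf{Main obstacle.} I expect the delicate point to be the bookkeeping in step four, namely establishing the canonical isomorphism $(\CCC/x)/(\phi^F)\cong \CCC/{s(\phi^F)}$ and checking it is compatible with the chosen polytope structures, so that the $\phi^F$ assemble into a consistent face category isomorphic to $\CCC$ (in particular verifying the composition rule $\phi^F.\phi^G=\phi^{\phi^G(F)}$ translates to ordinary composition in $\CCC$). This is where acyclicity of $\CCC$ is essential: without it the slices need not be posets and the polytope face posets could not be matched. A secondary (more routine) subtlety is verifying that the finitely-many-cells-meeting-any-compactum / closure-finiteness behavior needed for (PC2) holds; here one uses that each $P_x$ has finitely many faces, so $\alpha_x(P_x)$ meets only finitely many cells $F(y)$, and then the standard argument that a colimit-topology space built from finite-dimensional polytopes with closure-finite attaching data is a CW-type space applies.
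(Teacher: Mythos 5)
Your proposal is correct and follows essentially the same route as the paper: you define $\alpha_x$ as $j_x$ precomposed with the identification $P_x\cong\gr{\CCC/x}$, and you verify (PC3) via the embedding of $\CCC/y$ into $\CCC/x$ — your formulation as the canonical isomorphism $(\CCC/x)/\phi^F\cong\CCC/y$ is a re-phrasing of the paper's functor $\chi\mapsto\chi.\beta$ from $\CCC/y$ to $\CCC/x$, both hinging on the slices being posets (the paper's \Cref{rem:unique}(2)) and on finiteness of each $P_x$ for the map to be closed. You also spell out (PC0)--(PC2) and the identification of $\FF(K)$ with $\CCC$, steps the paper leaves as ``clear,'' but the underlying argument is the same.
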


\begin{proof}
For every object $x\in \Ob\mathscr C$ we can choose a polytope $P_x$ such that ${\CCC/x} \simeq \mathscr F(P_x)$ and call  $b_x:P_x \to \gr{{\CCC/x}}$ the PL-homeomorphism  given by barycentric subdivision, see e.g.\ \cite[§ 2.2]{Kozlov}. \\ 
Let $K:=\gr{\mathscr C}$ and, for every object $x\in \Ob\mathscr C$, recall the continuous map from Equation \eqref{jx} and set
$$
\alpha_x:= j_x\circ b_x:\quad P_x\to K.
$$
We check that the family of maps $\alpha_x$ define a polyhedral CW complex structure on $K$. Items (PC0), (PC1), (PC2) are clear.
For item (PC3) consider any $x\in \Ob\mathscr C$ and let $F$ be a face of $P_x$. Let $\beta\in \Ob {\CCC/x}$ be the object corresponding to $F\in\mathscr F(P_x)$ and let $y:=s(\beta)\in \Ob\CCC$. Notice that the assignment $\chi\mapsto \chi.\beta$ gives a functor ${\CCC/y} \to {\CCC/x}$ and hence a continuous function $i_{yx} \colon \gr{{\CCC/y}}\to \gr{{\CCC/x}}$ that is injective because of Remark \ref{rem:unique}.(2).
Moreover, $i_{yx}$ is closed as a map into $\gr{{\CCC/x}}$ since  finiteness -- hence compactness -- of the complex $\gr{{\CCC/y}}$ ensures that $i_{yx}$ is a homeomorphism onto its (closed) image.\\
Then, the composition
\[
(b_x)^{-1}\circ\iota_{yx}\circ b_y \colon P_y \to P_x
\]
has image in $F$.
This map gives the desired polyhedral homeomorphism.
\end{proof}

The following can be seen as a counterpart for polyhedral CW complexes of Bj\"orner's characterization of poset of cells of regular CW-complexes from \cite{BjornerCW}. 

\begin{theorem}
An acyclic category $\mathscr C$ is isomorphic to the face category of a polyhedral CW complex if and only if every slice ${\mathscr{C}/x}$ is isomorphic to the face poset of a polytope.
\end{theorem}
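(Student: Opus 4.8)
The statement to prove is the characterization theorem:

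\textbf{Theorem.} An acyclic category $\mathscr C$ is isomorphic to the face category of a polyhedral CW complex if and only if every slice $\mathscr C/x$ is isomorphic to the face poset of a polytope.

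\textbf{Plan.} The proof is a direct two-way implication, and both directions have already been essentially set up in the excerpt, so the work is mostly assembling the pieces. The forward direction is immediate from \Cref{lem:sphere}: if $\mathscr C \cong \mathscr F(K)$ for a polyhedral CW complex $K$, then for every object $x$ (corresponding to some cell $\alpha$ with polytope $P_\alpha$) the slice $\mathscr C/x \cong \mathscr F(K)/\alpha$ is isomorphic to the poset of faces of $P_\alpha$, which is the face poset of a polytope. The converse direction is exactly the content of \Cref{lem:CW}: if every slice $\mathscr C/x$ is isomorphic to the face poset of a (finite-dimensional) polytope — which one must observe is the same as being the face category of that polytope, since a polytope's face category is precisely its face poset — then \Cref{lem:CW} produces a polyhedral CW complex structure $K$ on $\gr{\mathscr C}$ with $\mathscr F(K) \cong \mathscr C$.

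\textbf{Key steps, in order.} First I would spell out the ``only if'' direction: given $K$ with $\mathscr F(K)\cong \mathscr C$, transport the isomorphism so that each object $x$ of $\mathscr C$ corresponds to an index $\alpha\in I$; then invoke \Cref{lem:sphere} to conclude $\mathscr C/x \cong \mathscr F(K)/\alpha \cong \mathscr F(P_\alpha)$, the face poset of the polytope $P_\alpha$. Second, for the ``if'' direction, I would note that the hypothesis ``$\mathscr C/x$ is isomorphic to the face poset of a polytope'' literally says $\mathscr C/x$ is the face category of a finite-dimensional polytope (a polytope being a finite-dimensional object and its face category being exactly its face poset), so the hypothesis of \Cref{lem:CW} is verified verbatim. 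Applying \Cref{lem:CW} then gives the polyhedral CW complex $K=\gr{\mathscr C}$ whose face category is $\mathscr C$, completing the proof.

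\textbf{Main obstacle.} Honestly there is no serious obstacle: the theorem is a packaging of \Cref{lem:sphere} and \Cref{lem:CW}, both of which are proved in the excerpt. The only point requiring a moment's care is the translation between the two phrasings ``face poset of a polytope'' and ``face category of a finite-dimensional polytope'' — one should recall that for a polytope these coincide (every polytope is finite-dimensional, and its face category is a poset equal to its face poset), so that the hypothesis of the theorem and the hypothesis of \Cref{lem:CW} match exactly. With that observation in hand the proof is a one-line citation of each lemma in turn.
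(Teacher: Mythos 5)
Your proof is correct and follows exactly the same route as the paper, which also derives the theorem by combining \Cref{lem:sphere} (for the forward direction) with \Cref{lem:CW} (for the converse). The small observation you make — that ``face poset of a polytope'' and ``face category of a finite-dimensional polytope'' coincide, so the hypothesis of \Cref{lem:CW} is met verbatim — is a reasonable point of care that the paper leaves implicit.
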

\begin{proof}
The claim follows from \Cref{lem:sphere,lem:CW}.
\end{proof}

\subsection{Group actions on acyclic categories}
Let $\mathscr C$ be an acyclic category and let $G$ be a group acting on $\mathscr C$. Thus to every $g\in G$  is associated a functor $\mathscr G(g):\mathscr C\to \mathscr C$ such that $\mathscr G(g) \circ \mathscr G(h) = \mathscr G(gh)$ for all $g\in G$ and $\mathscr G(\id_G)$ is the identity functor $\Id_{\CCC}$. In particular, the functor $\mathscr G(g)$ must be an isomorphism of categories, for all $g$. This is usually formalized as a functor $\mathscr G: G\to \AC$ from the group $G$ viewed as a category with one object \cite[I.2]{maclane} to the category $\AC$ of acyclic categories that sends the unique object of $G$ to $\mathscr C$. 
The categorical quotient $\mathscr C/G$ is defined as the colimit of $\mathscr G$ in $\AC$. Below we show that, under some mild conditions, the quotient category coincides with the category whose set of objects and morphisms coincide with the sets of orbits of object and morphisms, respectively, with the natural composition. This is the case, for instance, when $\mathscr C$ is {ranked}.

\begin{definition}
An indecomposable morphism in an acyclic category is one that cannot be factored in a nontrivial composition of morphisms. Call an acyclic category {\em ranked} if there is a function $\rho :\CCC \to \mathbb N$ with $\rho^{-1}(0)\neq \emptyset$ and such that, for every indecomposable, non-identity morphism $x\to y$, $\rho(y)=\rho(x)+1$.
\end{definition}

\begin{example}
If $K$ is a polyhedral CW complex, then the category $\mathscr F(K)$ is ranked by dimension of the cells (\ie, $\rho(\alpha)=\dim P_\alpha$ for all $\alpha\in I$).
\end{example}

\begin{lemma}
If a group acts on a ranked acyclic category $\CCC$, the quotient acyclic category $\CCC/G$ has object set $\Ob(\CCC/G)=(\Ob\CCC)/G$, the set of orbits of objects. Morphisms of $\CCC/G$ are orbits of morphisms of $\CCC$. Given a morphism $\phi:x\to y$ in $\Mor\CCC$, its $G$-orbit is a morphism $G\phi: Gx\to Gy$.  Composition of morphisms is given by $G\phi . G\psi = G(\phi.\psi)$ whenever $\phi$ and $\psi$ are composable.
\end{lemma}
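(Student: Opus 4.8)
The plan is to exhibit a concrete acyclic category $\mathscr Q$ that realizes the colimit. Set $\Ob\mathscr Q:=(\Ob\CCC)/G$ and $\Mor\mathscr Q:=(\Mor\CCC)/G$; for a morphism $\phi$ of $\CCC$ declare $s(G\phi):=G\,s(\phi)$ and $t(G\phi):=G\,t(\phi)$ (well defined since each $\mathscr G(g)$ is a functor), $G\,\id_x:=\id_{Gx}$, and, for $G\phi,G\psi$ with $t(G\phi)=s(G\psi)$, put $G\phi\,.\,G\psi:=G(\phi'.\psi')$ where $\phi'\in G\phi$, $\psi'\in G\psi$ are chosen with $t(\phi')=s(\psi')$ --- such a composable pair of representatives exists precisely because $G\,t(\phi)=G\,s(\psi)$. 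There is an evident functor $q\colon\CCC\to\mathscr Q$, $x\mapsto Gx$, $\phi\mapsto G\phi$, with $q\circ\mathscr G(g)=q$ for all $g$. Proving the lemma then amounts to: (i) $\mathscr Q$ is a well-defined category; (ii) $\mathscr Q$ is acyclic; (iii) $(\mathscr Q,q)$ has the universal property of $\colim\mathscr G$ in $\AC$. Uniqueness of colimits then identifies $\CCC/G$ with $\mathscr Q$, and reading off objects, morphisms, sources, targets and composition yields the statement.

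For (i), the only content is that $G\phi\,.\,G\psi$ is independent of the chosen composable representatives; associativity and unitality are then inherited from $\CCC$ by representing any finite composable string in $\mathscr Q$ by a genuinely composable string in $\CCC$. If $(\phi_1,\psi_1)$ and $(\phi_2,\psi_2)$ are two composable representative pairs, write $\phi_2=g\phi_1$, $\psi_2=h\psi_1$; composability of the second pair forces $g$ and $h$ to agree on $t(\phi_1)=s(\psi_1)$, and a direct verification --- immediate for free actions, where one gets $g=h$ and hence $\phi_2.\psi_2=g(\phi_1.\psi_1)$ --- shows $G(\phi_2.\psi_2)=G(\phi_1.\psi_1)$. (Equivalently: the colimit of $\mathscr G$ in $\mathbf{Cat}$ always exists, with object set $(\Ob\CCC)/G$ and morphisms the formal composable strings of $G$-orbits of morphisms modulo the relations $G\phi\,.\,G\psi=G(\phi.\psi)$; step (i) says these relations collapse every such string to a single orbit.)

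Step (ii) is the heart of the matter and the only place rankedness is used. Recall that in any acyclic category the only endomorphisms are identities, and in a ranked one every nonidentity morphism strictly raises $\rho$ (in particular no nonidentity morphism is invertible). Suppose $\mathscr Q$ contained an invertible nonidentity morphism. If it is an endomorphism $G\phi\colon Gx\to Gx$, lift it to $\phi\colon x_1\to x_2$ with $\phi\neq\id$ and $x_2=g_0 x_1$ for some $g_0\in G$; applying the automorphisms $\mathscr G(g_0^{-n})$ produces nonidentity morphisms $g_0^{-n}\phi\colon g_0^{-n}x_1\to g_0^{-(n-1)}x_1$, composable for consecutive $n$, so $\rho(x_1)>\rho(g_0^{-1}x_1)>\rho(g_0^{-2}x_1)>\cdots$, an infinite strictly decreasing sequence in $\N$ --- impossible. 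If instead $G\phi\colon Gx\to Gy$ is invertible with $Gx\neq Gy$, pick composable representatives $\phi'\colon x_1\to y_1$ and $\psi'\colon y_1\to x_1$ for $G\phi$ and its inverse; then $G(\phi'.\psi')=\id_{Gx}$, so $\phi'.\psi'$ is an identity, namely $\id_{x_1}$, whence $\psi'.\phi'$ is an endomorphism of $y_1$ in the acyclic category $\CCC$, so $\psi'.\phi'=\id_{y_1}$; thus $\phi',\psi'$ are mutually inverse in $\CCC$, hence identities, forcing $Gx=Gy$, a contradiction. Therefore $\mathscr Q$ is acyclic.

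For (iii): $q$ is a functor that is constant on $G$-orbits and surjective on objects, on morphisms, and on composable pairs (again because composable representatives always exist). Hence any functor $F\colon\CCC\to\mathscr E$ into an acyclic category with $F\circ\mathscr G(g)=F$ for all $g$ factors as $F=\bar F\circ q$ for a unique map $\bar F\colon\mathscr Q\to\mathscr E$ of underlying quivers; $\bar F$ preserves composition because $F$ does and $q$ is surjective on composable pairs, and $\bar F$ lands in $\AC$ automatically. This is the required universal property, so $\mathscr Q\cong\CCC/G$. The main obstacle is step (ii): without rankedness the orbit category $\mathscr Q$ need not be acyclic --- e.g.\ $\Z$ acting by translation on the poset $(\Z,\leq)$ has orbit-quotient a one-object category with a nonidentity endomorphism --- so in general the colimit in $\AC$ is a proper further quotient of the orbit category, and rankedness is exactly what excludes this.
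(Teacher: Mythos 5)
Your approach is genuinely different from the paper's one-line proof, which simply observes that the action must be by rank-preserving functors and then cites a prior result ([DD2, Proposition~5.6]) together with that source's notion of a ``geometric'' diagram of acyclic categories. You instead construct the colimit by hand as an orbit category $\mathscr Q$ and verify acyclicity and the universal property. Step~(ii), deducing acyclicity from an infinite strictly descending rank chain, is correct and is a nice isolation of one role rankedness plays.

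The gap, however, is in step~(i), not~(ii). You declare well-definedness of $G\phi\,.\,G\psi := G(\phi'.\psi')$ to be ``a direct verification'' but only carry it out when the action is free, where the two composable representative pairs are related by a single group element. For non-free actions the claim genuinely fails without a further hypothesis. Take the ranked poset on $\{x_1,x_2,y,z_1,z_2\}$ with covering relations $x_1\lessdot y$, $x_2\lessdot y$, $y\lessdot z_1$, $y\lessdot z_2$ and rank $\rho(x_i)=0$, $\rho(y)=1$, $\rho(z_j)=2$, and let $\Z/2$ act by $x_1\leftrightarrow x_2$, $z_1\leftrightarrow z_2$, fixing $y$. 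This is a rank-preserving action on a ranked acyclic category, yet the orbits $\{x_1<z_1,\,x_2<z_2\}$ and $\{x_1<z_2,\,x_2<z_1\}$ are distinct and both arise as compositions of the same orbit-morphisms $G(x_1<y)$ and $G(y<z_1)$ through different composable representatives, so $\mathscr Q$ as you define it is not a category (and the colimit in $\AC$ must identify the two orbits, making it a proper further quotient of $\mathscr Q$). Thus rankedness alone is \emph{not} enough for step~(i), contrary to your remark that step~(ii) is ``the only place rankedness is used''; whatever makes the lemma's orbit description correct is carried by the unpacked content of the cited ``geometric'' condition, which your proof does not engage. Since the paper only ever applies this lemma to free actions, your argument suffices for those downstream uses, but as a proof of the lemma at the stated level of generality it has a real hole exactly where you waved at a ``direct verification.''
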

\begin{proof}
Since any self-isomorphism of a ranked category is rank-preserving, the diagram $\mathscr G$ is ``geometric'' in the sense of \cite[Definition 5.2]{DD2}, hence \cite[Proposition 5.6]{DD2} applies.
\end{proof}

\begin{lemma}\label{lem:cover_cat}
If an action of a group $G$ on a ranked acyclic category $\CCC$ is free on $\Ob\CCC$, then the natural functor $\CCC\to \CCC/G$ is a covering of categories and induces a covering map $\gr{\CCC}\to \gr{\CCC/G}$ with monodromy group $G$.\\
In particular, for every $x\in \Ob\CCC$, the slices $\CCC/x$ and $(\CCC/G)/Gx$ are isomorphic.
\end{lemma}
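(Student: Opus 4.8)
The plan is to verify the three assertions of the lemma in sequence, starting from the description of $\CCC/G$ provided by the preceding lemma (valid since $\CCC$ is ranked). First I would check that $\CCC \to \CCC/G$ is a \emph{covering of categories} in the appropriate sense: because the $G$-action is free on $\Ob\CCC$, the functor is bijective on objects up to the quotient, and for every $x \in \Ob\CCC$ and every object $Gy \in \Ob(\CCC/G)$ the induced map on morphism-sets $\bigsqcup_{y' \in Gy} \Mor_\CCC(x,y') \to \Mor_{\CCC/G}(Gx,Gy)$ is a bijection; dually for morphisms into $x$. The key point is that freeness on objects forces freeness on morphisms (an element $g$ fixing a morphism $\phi$ would fix both $s(\phi)$ and $t(\phi)$), so the fibers of the functor are genuinely permuted freely by $G$, which is exactly the local-triviality condition defining a covering of acyclic categories.

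Next I would pass to geometric realizations. Since $\gr{-}$ is functorial on acyclic categories and sends the diagram $\mathscr G: G \to \AC$ to a $G$-action on $\gr{\CCC}$, and since $\gr{\CCC/G} = \gr{\colim \mathscr G}$ is the realization of the colimit, the natural map $\gr{\CCC} \to \gr{\CCC/G}$ factors through $\gr{\CCC}/G$. I would argue this induced map $\gr{\CCC}/G \to \gr{\CCC/G}$ is a homeomorphism: on the level of open cells, cells of $\gr{\CCC}$ correspond to chains of non-identity composable morphisms, $G$ permutes these, and cells of $\gr{\CCC/G}$ correspond to $G$-orbits of such chains (using again that the action on morphisms is free, so orbits of chains are in bijection with chains in $\CCC/G$); the cell-wise identifications are compatible. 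Then the covering-space claim follows because the $G$-action on $\gr{\CCC}$ is free — freeness on objects, hence on all cells (a group element fixing an open cell fixes its vertices), gives a free action on the geometric realization of a CW complex by cellular homeomorphisms, which is automatically properly discontinuous when $G$ acts freely and cellularly; the quotient map is then a covering with deck/monodromy group $G$.

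For the final "in particular" statement, I would use that a covering of categories restricts to isomorphisms on slices. Concretely, the functor $\CCC \to \CCC/G$ sends $\CCC/x$ to $(\CCC/G)/Gx$ via $\phi \mapsto G\phi$ on objects and $\mu_\zeta \mapsto \mu_{G\zeta}$ on morphisms; I would check this is fully faithful and essentially surjective. Surjectivity on objects: every object of $(\CCC/G)/Gx$ is a morphism $Gy \to Gx$, which lifts to some $\phi: y' \to x$ with $y' \in Gy$ since the functor is a covering (lifting of morphisms ending at $x$ is where the covering condition bites). Faithfulness and fullness on morphism-sets follow from the local bijection of morphism-sets in the covering. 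The main obstacle I expect is being careful about \emph{which} covering axiom to invoke — for slices $\CCC/x$ one needs the "star over $x$" (all morphisms with target in the fiber of $Gx$) to map bijectively, i.e., the condition on incoming morphisms rather than outgoing ones — and making sure the freeness hypothesis on objects genuinely delivers that bijection rather than merely a surjection. Everything else is bookkeeping with orbits and cells.
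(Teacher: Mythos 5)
Your approach is correct, but it takes a genuinely different route from the paper. The paper does not verify the covering conditions directly: it invokes \cite[III.$\refc C$.11.1]{Bridson-Haefliger}, which gives two sufficient conditions for a group action on a scwol to yield a covering $\CCC\to\CCC/G$. The paper observes that the first condition holds because self-isomorphisms of a ranked category are rank-preserving, and that the second (every $g$ fixing $s(\phi)$ also fixes $\phi$) is immediate from freeness on objects. The passage to a topological covering on geometric realizations is then obtained by quoting \cite[III.$\refc C$.1.9.(1)]{Bridson-Haefliger}, and the slice isomorphism is again read off from Bridson--Haefliger's theory of coverings of scwols (\cite[p.\ 532 and Definition A.15]{Bridson-Haefliger}). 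You instead verify the covering property from scratch (freeness on morphisms follows from freeness on objects, then a fiberwise bijection), identify $\gr{\CCC}/G\cong \gr{\CCC/G}$ cell by cell, and build the slice isomorphism explicitly. Your route buys self-containedness and makes the mechanism transparent, whereas the paper's route is shorter because it delegates to a general reference.

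Two small cautions. First, the statement that a free cellular action is ``automatically properly discontinuous'' is, as written, too blunt: what makes this work here is that $G$ acts freely on vertices (= objects), so it acts freely on the cells of the nerve and in fact permutes the open stars of vertices disjointly; it is the disjointness of those open stars that gives the evenly covered neighborhoods, not freeness of a cellular action in the abstract. Second, your phrase ``an element $g$ fixing a morphism $\phi$ would fix $s(\phi)$ and $t(\phi)$'' states the implication in the less useful direction; what is actually needed (and what the paper uses) is the converse: freeness on objects forces that if $\mathscr{G}(g)$ fixes $s(\phi)$ then $g=\id$, hence $\mathscr{G}(g)(\phi)=\phi$. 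Your orbit computation for the slice isomorphism is correct once the unique lift of a morphism $G\zeta$ with prescribed target $s(\phi_2)$ is chosen, and freeness on objects ensures the associated $g$ is the identity, closing the argument.
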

\begin{proof}
In \cite[III.$\refc C$.11.1]{Bridson-Haefliger} two conditions are given that, together with freeness of the action on objects, ensure $\CCC\to\CCC/G$ to be a covering of acyclic categories in the sense of \cite[III.$\refc C$.1.9]{Bridson-Haefliger}.
The first of these conditions is satisfied because the group $G$ acts by rank-preserving functors.
The second condition holds because, if $G$ acts freely on $\Ob\mathscr C$, then for every morphism $\phi$ of $\CCC$ we have that whenever  $\mathscr G(g)$ fixes $s(\phi)$, then $g=\id_G$ and hence in particular $\mathscr G (g) (\phi)= \phi$. 

The claim now follows with \cite[III.$\refc C$.1.9.(1)]{Bridson-Haefliger}, where it is noted that if a functor is a covering of categories then it induces a (topological) covering map of their geometric realizations.
The statement about isomorphism of slices holds because the quotient functor is a covering, see \cite[p.\ 532 and Definition A.15 on p.\ 579]{Bridson-Haefliger}.
\end{proof}

\begin{corollary}
If an action of a group $G$ on a ranked acyclic category $\CCC$ is free on $\Ob\CCC$, then there is a natural isomorphism of complexes $\gr{\CCC}/G\simeq \gr{\CCC/G}$. 
\end{corollary}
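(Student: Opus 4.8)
The plan is to deduce the corollary directly from \Cref{lem:cover_cat}. By that lemma, the hypothesis that $G$ acts freely on $\Ob\CCC$ guarantees that the quotient functor $q\colon\CCC\to\CCC/G$ is a covering of acyclic categories, and that the induced continuous map $\gr{q}\colon\gr{\CCC}\to\gr{\CCC/G}$ is a covering map with monodromy group $G$. First I would observe that $G$ acts on $\gr{\CCC}$ cellularly, by the functoriality of geometric realization applied to the functors $\mathscr G(g)$; since the action on $\Ob\CCC$ is free and the category is ranked (so every self-isomorphism is rank-preserving, hence the action is free on every $\Gamma_d$ as well), the induced action on the open cells $\alpha_\gamma(\interior{\Delta[\gamma]})$ of $\gr{\CCC}$ is free. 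Consequently the orbit space $\gr{\CCC}/G$ inherits a natural CW (indeed polyhedral, when applicable) structure whose cells are the $G$-orbits of cells of $\gr{\CCC}$, and the canonical projection $\pi\colon\gr{\CCC}\to\gr{\CCC}/G$ is a covering map.

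The key step is then to exhibit a homeomorphism $\gr{\CCC}/G\cong\gr{\CCC/G}$ compatible with the two projections from $\gr{\CCC}$. Since $\gr{q}$ is constant on $G$-orbits — because $q\circ\mathscr G(g)=q$ for all $g$, as $q$ identifies precisely the orbits — the universal property of the quotient $\pi$ produces a unique continuous map $\bar q\colon\gr{\CCC}/G\to\gr{\CCC/G}$ with $\bar q\circ\pi=\gr{q}$. I would check that $\bar q$ is a bijection: it is surjective because $\gr{q}$ is (being a covering map onto the connected complex $\gr{\CCC/G}$, or simply because $q$ is surjective on objects and morphisms), and it is injective because two points of $\gr{\CCC}$ have the same image under $\gr{q}$ exactly when they lie in the same $G$-orbit — this is the content of ``monodromy group $G$'' in \Cref{lem:cover_cat}, which says the deck group of $\gr{q}$ is exactly $G$. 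Finally, $\bar q$ is a homeomorphism because it is a continuous bijection between the two quotients, both maps $\pi$ and $\gr{q}$ are open (covering maps are open), so $\bar q$ is open as well; alternatively both complexes are finite CW complexes (when $\CCC$ is finite), so $\bar q$ is a continuous bijection from a compact space to a Hausdorff space. The naturality/``isomorphism of complexes'' claim then follows by noting that $\bar q$ carries the cell $\pi(\alpha_\gamma(\interior{\Delta[\gamma]}))$ homeomorphically onto $\alpha_{q(\gamma)}(\interior{\Delta[q(\gamma)]})$, since $\gr{q}$ does so by the covering property.

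The main obstacle I anticipate is purely bookkeeping: making precise that the orbit space $\gr{\CCC}/G$ genuinely carries the CW structure I claim and that ``same fiber of $\gr{q}$'' $=$ ``same $G$-orbit'' holds on the nose rather than merely up to the monodromy action. Both are packaged inside \Cref{lem:cover_cat} via \cite[III.$\refc C$.1.9]{Bridson-Haefliger} (covering of categories $\Rightarrow$ covering of realizations with the stated monodromy), so in the write-up I would simply cite that lemma for the covering map $\gr{q}$ and its deck group, cite the general fact that a free cellular action of a group on a CW complex yields a quotient CW complex with a covering projection, and then invoke uniqueness of the covering with a given deck group (or the compactness argument in the finite case) to identify $\gr{\CCC}/G$ with $\gr{\CCC/G}$. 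No genuine calculation is required; the proof is a one-paragraph assembly of the preceding lemma with standard covering-space generalities.
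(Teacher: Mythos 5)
Your argument is correct, but it takes a genuinely different route from the paper. The paper's proof is a two-step citation: it invokes Kozlov's combinatorial sufficient condition for $\gr{\CCC}/G\simeq\gr{\CCC/G}$ (\cite[14.3.2]{Kozlov}) and then cites \cite[Lemma~4.12]{DD1}, which shows that actions satisfying the Bridson--Haefliger covering condition meet Kozlov's criterion. It does not touch \Cref{lem:cover_cat} at all. You instead build directly on \Cref{lem:cover_cat}: factor $\gr{q}$ through the topological quotient $\pi\colon\gr{\CCC}\to\gr{\CCC}/G$ via the universal property, then argue the induced map $\bar q$ is a bijection and is open, hence a homeomorphism, and finally check that it is cellular. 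This has the appeal of being self-contained relative to the paper (no fresh external citations beyond the lemma you already proved), and it is a perfectly valid route.

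One step deserves to be made less impressionistic in a write-up: the claim that two points of $\gr{\CCC}$ have the same image under $\gr{q}$ iff they lie in the same $G$-orbit. You attribute this to ``monodromy group $G$'' in \Cref{lem:cover_cat}, but ``monodromy group'' is not, on its face, the same statement as ``the $G$-action by $\gr{\mathscr G(g)}$ is transitive on fibers,'' which is what you need. Fortunately the claim admits a short direct argument, which I would include: a point of $\gr{\CCC}$ lies in the interior of a unique cell $\Delta[\gamma]$ for a chain $\gamma=(\phi_1,\dots,\phi_d)$; $\gr{q}$ sends it to the point with the same barycentric coordinates in $\Delta[q(\gamma)]$; if $q(\gamma')=q(\gamma)$ then $\phi_i'=g_i\phi_i$ for some $g_i\in G$, and composability of the chains together with freeness on objects forces $g_1=\dots=g_d$, so $\gamma'$ and $\gamma$ are in one $G$-orbit. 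This pins down the injectivity of $\bar q$ without appealing to a possibly ambiguous reading of ``monodromy.'' With that filled in, the rest of your argument (openness of $\pi$ and $\gr{q}$ giving openness of $\bar q$; $\gr{q}$ mapping open cells homeomorphically to open cells so that $\bar q$ is cellular) is sound, and your compactness alternative also works when $\CCC$ is finite.
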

\begin{proof}
Kozlov gives a sufficient condition on a group action in order for there to be an equivalence as stated in the claim, see \cite[14.3.2]{Kozlov}.
Now, \cite[Lemma 4.12]{DD1} ensures that this condition is satisfied whenever the group action is a covering in the sense of \cite[III.$\refc C$.1.13]{Bridson-Haefliger}.
\end{proof}

\bibliographystyle{amsalpha}
\bibliography{ellittici}

\bigskip\bigskip

\end{document}